\definecolor{darkgreen}{HTML}{00A000}
\definecolor{darkblue}{HTML}{0000A0}
\definecolor{darkred}{HTML}{D00000}
\newcommand{\JOURNAL}{Journal of Mathematical Physics, Analysis, Geometry}
\renewcommand{\emph}[1]{\textsl{#1}}
\setlist[enumerate,1]{label=\upshape{\arabic*.}, ref=\upshape{\arabic*}, leftmargin=3.6ex, labelwidth=2ex, topsep=3pt, listparindent=3.6ex}
\numberwithin{equation}{section}
\numberwithin{figure}{section}
\newcommand{\ps@jmpag}{%
\renewcommand{\@oddfoot}{}
\renewcommand{\@evenfoot}{}     
\renewcommand{\@evenhead}{\underline{\makebox[135mm][c]{\lower.5em%
       \hbox  to \textwidth{\thepage\hfil \textsl{\leftmark}  \hfil}}}}%      
\renewcommand{\@oddhead}{\underline{\makebox[135mm][c]{\lower.5em%
       \hbox to \textwidth{\hfil\textsl{\rightmark} \hfil\thepage}}}}%
}%
   \renewcommand{\thebibliography}[1]{\begin{small}
    \section*{\refname}\list
   {\@biblabel{\arabic{enumiv}}}{\settowidth\labelwidth{\@biblabel{#1}}%
    \leftmargin\labelwidth
    \advance\leftmargin\labelsep
    \usecounter{enumiv}%
    \let\p@enumiv\@empty
    \def\theenumiv{\arabic{enumiv}}}%
    \def\newblock{\hskip .11em plus.33em minus.07em}%
    \sloppy\clubpenalty4000\widowpenalty4000
    \sfcode`\.=1000\relax}
\renewcommand{\endthebibliography}{%
  \def\@noitemerr{\@warning{Empty `thebibliography' environment}}%
  \endlist\end{small}
  }%
\newcommand{\key}[1]{\par\vskip 0.5em\small{\slshape Key words:\/} #1}
\newcommand{\msc}[1]{\par\vskip 0.5em\small{\slshape Mathematical Subject Classification 2010:\/} #1}
\renewenvironment{abstract}{\begin{small}\begin{quotation}\vskip -0.5em}%
                           {\end{quotation}\end{small}\par
}
\renewcommand{\@seccntformat}[1]{\csname the#1\endcsname.\quad}
\renewcommand{\section}{\@startsection{section}{1}{18pt}{3.25ex plus 1ex minus 0.2ex}%
{1.5ex plus .2ex}{\bfseries\rmfamily\large}}
\renewcommand{\subsection}{\@startsection{subsection}{2}{18pt}{3.25ex plus 1ex minus 0.2ex}%
{-1.5ex plus .2ex}{\bfseries\rmfamily}} %%%%%%%%
\newtheoremstyle{myplain}
  {\topsep}   % ABOVESPACE
  {\topsep}   % BELOWSPACE
  {\itshape}  % BODYFONT
  {18pt}       % INDENT (empty value is the same as 0pt)
  {\bfseries} % HEADFONT
  {.}         % HEADPUNCT
  {5pt plus 1pt minus 1pt} % HEADSPACE
  {}          % CUSTOM-HEAD-SPEC
\newtheoremstyle{mydefinition}
  {\topsep}   % ABOVESPACE
  {\topsep}   % BELOWSPACE
  {\normalfont}  % BODYFONT
  {18pt}       % INDENT (empty value is the same as 0pt)
  {\bfseries} % HEADFONT
  {.}         % HEADPUNCT
  {5pt plus 1pt minus 1pt} % HEADSPACE
  {}          % CUSTOM-HEAD-SPEC
\newtheoremstyle{myremark}
  {\topsep}   % ABOVESPACE
  {\topsep}   % BELOWSPACE
  {\normalfont}  % BODYFONT
  {18pt}       % INDENT (empty value is the same as 0pt)
  {\slshape} % HEADFONT
  {.}         % HEADPUNCT
  {5pt plus 1pt minus 1pt} % HEADSPACE
  {}          % CUSTOM-HEAD-SPEC
\theoremstyle{myremark}
\theoremstyle{myplain}
\newtheorem{theorem}{Theorem}[section]
\newtheorem{corollary}[theorem]{Corollary}
\theoremstyle{mydefinition}
\newtheorem{definition}[theorem]{Definition}
\theoremstyle{myremark}
\newtheorem{remark}[theorem]{Remark}
\newtheorem{example}[theorem]{Example}
\renewenvironment{proof}[1][\proofname]{\par
\pushQED{\qed}%
\normalfont \topsep6\p@\@plus6\p@\relax
\trivlist
\item\relax
{\hspace{18pt}\slshape
#1\@addpunct{.}}\hspace\labelsep\ignorespaces
}{%
\popQED\endtrivlist\@endpefalse
}
\begin{document}

%%%%%%%%%%%%%% TITLE and AUTHORS %%%%%%%%%%%%%%%%%%

\title{Reachability and Controllability Problems for the Heat Equation on a Half-Axis}

\author{Larissa  Fardigola and  Kateryna  Khalina}

%%%%%%%%%%%%%%%% END TITLE and AUTHORS %%%%%%%%%%%%%%%%%%%

%%%%%%%%%%%%%%%%%%%%%% headings %%%%%%%%%%%%%%

\makeatletter %%%%%%%%%%%% !!!!!!!!!!!!!!!!!!!!!!!!!!!!!!

\clearpage\thispagestyle{empty}%\refstepcounter{page}
 \noindent
           {The final version of the paper will be published in\hfill
           \newline\JOURNAL\hfill       
           }
\medskip
\markboth{\@author}{\@title}
\refstepcounter{part}
\begin{center}%
  {\LARGE\bfseries\@title\footnote{\rule{-6mm}{4mm}\copyright\space\@author,\space 2018} \par}%
  \vskip 0.2em\null
  {\large
   \lineskip 0.5em
   \Large\@author\\[0.5em]
   \par}%
\end{center}%
 \par
%}%

\makeatother %%%%%%%%%%%%%%%%%%%% !!!!!!!!!!!!!!!!!!!!!!!!

\markright{Reachability and Controllability Problems for the Heat Equation \ldots}

%%%%%%%%%%%%%%%%%%%%%%% end headings %%%%%%%%%%%%%%%%%%%%%%%%%%%%%

%%%%%%%%%%%%%%%%%%%%%  MACROS %%%%%%%%%%%%%%%%%%%%

\renewcommand\textfraction{0}

\renewcommand\topfraction{1}

\renewcommand\bottomfraction{1}

%%%%%%%%%%%%%%%%%%%%%%%%%%%%%%%%%%%%%%%%%%%%%%%%%%%%%%%%%%%%%%%%%%%%%%%
\newcommand{\R}{{\mathbb R}}
\newcommand{\N}{{\mathbb N}}
\newcommand{\Z}{{\mathbb Z}}
\newcommand{\CC}{{\mathbb C}}
\newcommand{\Q}{{\mathbb Q}}
\renewcommand{\Re}{\mathop{\mathrm{Re}}}
\renewcommand{\Im}{\mathop{\mathrm{Im}}}
%%%%%%%%%%%%%%%%%%%%%%%%%%%%%%%%%%%%%%%%%%%%%%%%%%%%%%%%%%%%%%%%%%%%%%%
%%%%%%%%%%%%%%%%%%%%%%%%%%%%%%%%%%%%%%%%%%%%%%%%%%%%%%%%%%%%%%%%%%%%%%%%%%%%%
\newcommand{\nnl}{\left\|}
\newcommand{\nnr}{\right\|}
\newcommand{\nl}{\left|}
\newcommand{\nr}{\right|}
\newcommand{\pl}{\left(}
\newcommand{\pr}{\right)}
\newcommand{\bl}{\left[}
\newcommand{\br}{\right]}
\newcommand{\bbl}{\left\{}
\newcommand{\bbr}{\right\}}
%%%%%%%%%%%%%%%%%%%%%%%%%%%%%%%%%%%%%%%%%%%%%%%%%%%%%%%%%%%%%%%%%%%%%%%%%%%%%
%%%%%%%%%%%%%%%%%%%%%%%%%%%%%%%%%%%%%%%%%%%%%%%%%%%%%%%%%%%%%%%%%%%%%%%%%%%%%
\newcommand{\sgn}{\mathop{\mathrm{sgn}}}
\newcommand{\supp}{\mathop{\mathrm{supp}}}
\newcommand{\RR}{\EuScript{R}}
\newcommand{\FFF}{\EuScript{F}}
\newcommand{\THH}{\widetilde{H}}%%%%%%%%%%%%
\newcommand{\zco}{{\text{$\bigcirc$\kern-6.2pt\raisebox{-0.5pt}{$0$}}}}
\newcommand{\zcx}[1]{~\mbox{#1\kern-7.5pt\raisebox{0.5pt}{$\bigcirc$}}}
%%%%%%%%%%%%%%%%%%%%%%%%%%%%%%%%%%%%%%%%%%%%%%%%%%%%%%%%%%%%%%%%%%%%%%%%%

%%%%%%%%%%%%%%%%%%%%% END  MACROS %%%%%%%%%%%%%%%%%%%%%%

%%%%%%%%%%%%%%%%%%%%%%% ABSTRACT %%%%%%%%%%%%%%%%%%%%%%%%

\begin{abstract}
In the paper, problems of controllability, approximate controllability, reachability and approximate reachability are studied for the control system $w_t=w_{xx}$, $w(0,\cdot)=u$, $x>0$, $t\in(0,T)$, where $u\in L^\infty(0,T)$ is a control. It is proved that each end state of this system is approximately reachable in a given time $T$, and each its initial state is approximately controllable in a given time $T$. A necessary and sufficient condition for reachability in a given time $T$ is obtained in terms of solvability a Markov power moment problem. It is also shown that there is no initial state that is null-controllable in a given time $T$. The results are illustrated by examples.

\key{heat equation, controllability, approximate controllability, reachability, approximate reachability, Markov power moment problem.}

\msc{93B05, 35K05, 35B30}
\end{abstract}

%%%%%%%%%%%%%%%%%%%% END ABSTRACT %%%%%%%%%%%%%%%%%%%%%%%%%%%%%

\section{Introduction}

Consider the  heat equation  on a half-axis
\begin{align}
&w_t=w_{xx},&& x\in(0,+\infty),\ t\in(0,T),\label{eq}\\ %\label{feq}
\intertext{controlled by the boundary condition}
&w(0,\cdot)=u,&& t\in(0,T),\label{bc}\\  %\label{fbc}
\intertext{under the initial condition}
&w(\cdot,0)=w^0,&&x\in(0,+\infty), \label{ic}\\
\intertext{and the seering condition}
&w(\cdot,T)=w^T,&&x\in(0,+\infty), \label{ec}
\end{align}
where $T>0$, $u\in L^\infty(0,T)$ is a control, $\pl\frac d{dt}\pr^m w:[0,T]\to H_\zco^{-2m}$, $m=0,1$, $w^0,w^T\in H_\zco^0=L^2(0,+\infty)$.  Here, for $m=0,1,2$,
\begin{align*}
H_\zco^m&=\bbl\varphi\in L^2(0,+\infty)\mid \pl \forall k=\overline{0,m}\  \varphi^{(k)}\in L^2(0,+\infty)\pr\right.\\
&\kern19.3ex\left.\wedge  \pl\forall k=\overline{0,m-1} \  \varphi^{(k)}(0^+)=0\pr\bbr
\end{align*}
with the norm
\begin{equation*}
\nnl \varphi\nnr_\zco^m=\sqrt{\sum_{k=0}^m \binom m k \pl\nnl \varphi^{(k)}\nnr_{L^2(0,+\infty)} \pr^2},
\end{equation*}
and $H_\zco^{-m}=\pl H_\zco^m\pr^*$ with the strong norm $\nnl\cdot\nnr_\zco^{-m}$ of the adjoint space. We have $H^0=L^2(0,+\infty)=\pl H_\zco^0\pr^*=H_\zco^{-0}$.

In the paper, we study  reachability and controllability problems for the heat equation on a half-axis. Note that these problems for the heat equation on domains bounded with respect to  spatial variables were investigated rather completely in a number of papers (see, e.g., \cite{MP,DE,EZua} and references therein). However controlability problems for the heat equation on domains unbounded with respect to spatial variables were not fully investigated. These problems for this equation were studied in \cite{TerZua,CMV,MZua1,CMZuaz,SMEZ}. In particular, in \cite{SMEZ}, null-controllability problem for control system \eqref{eq}--\eqref{ic} with $L^2$-control ($u\in L^2(0,T)$) was investigated in a weighted Sobolev space of negative order.  Using similarity variables and developing the solutions in the Fourier series with respect to the orthonormal basis $\{\phi_m\}_{m=1}^\infty$, the authors reduced the control problem to a moment problem
\begin{equation*}
\int_0^S e^{ms} \widetilde u(s)\, ds=\alpha_m,\quad m=\overline{1,\infty},
\end{equation*}
where $\phi_m(y)=C_m \mathcal H_{2m-1}(y/2) e^{-y^2/4}$, $\mathcal H_{2m-1}$ is the Hermit polynomial, $\alpha_m$ is determined by the Fourier coefficient of the initial state of reduced control problem, $m=\overline{1,\infty}$. The solution to the moment problem determines a solution to the control problem and vice versa. The authors proved that the moment problem admits $L^2$-solution iff $\alpha_m$ grows exponentially as $m\to \infty$. In particular, they proved that if $\alpha_m=O(e^{m\delta})$ as $m\to\infty$ for all $\delta>0$, then the initial state associated with $\{\alpha_m\}_{m=1}^\infty$ cannot be steered to the origin by $L^2$-control. In \cite{SMEZ}, it was also asserted that each initial state is approximately null-controllable in a given time $T>0$ by $L^2$-controls.

In the present paper, we study control system \eqref{eq}--\eqref{ic} in $H^0=L^2(0,+\infty)$ with $L^\infty$-control ($u\in L^\infty(0,T)$). Note that $L^\infty$-controls allow us consider initial states and solutions of the control system in the Sobolev space of order zero in contrast to \cite{SMEZ}, where the system was studied in a weighted Sobolev space of negative order as a result of using of $L^2$-controls. In Section \ref{p}, considering the odd extension with respect to $x$ of the initial state and the solution to \eqref{eq}--\eqref{ic}, we reduce this system to control system \eqref{eq1}, \eqref{ic1} in spaces $\THH^m$ of all odd functions of $H^m$. 
Further control system \eqref{eq1}, \eqref{ic1} is considered instead of control system \eqref{eq}--\eqref{ic}. In Section \ref{r}, we obtain necessary and sufficient condition for an end state $W^T$ be reachable, using controls $u\in L^\infty(0,T)$ bounded by a given constant $L>0$, from the origin. This reachability problem is reduced to an infinite Markov power moment (Theorem \ref{thmom}). Moreover, it is proved that the solutions to the finite Markov power moment problem give us control bounded by $L$ and solving the approximate reachability problem (Theorem \ref{thmomap}). 
The result of this theorem is illustrated by Examples \ref{ex1} and \ref{ex2} of  Section \ref{ex}. In Section \ref{ar}, we prove that each end state $W^T\in\THH^0$ is approximately reachable from the origin, using controls $u\in L^\infty(0,T)$, in a given time $T>0$ (Theorem \ref{thappr}). To prove this theorem, we develop $W^T$ in Fourier series with respect to $\{\psi_n^T\}_{n=0}^\infty$, $\psi_n^T(x)=\mathcal H_{2n+1}(x/\sqrt{2T}) e^{-x^2/(4T)}$, $n=\overline{0,\infty}$. 
First, for each $n=\overline{0,\infty}$, we find a sequence of controls $\{u_l^n\}_{l=0}^\infty$ that solves approximate reachability problem for the end state $\psi_n^T$. We use the Fourier transform with respect to $x$ and find these controls from the relation
\begin{equation*}
\pl\FFF \psi_n^T\pr(\sigma)
=(-1)^{n+1}i\sqrt{2T}\mathcal H_{2n+1}(\sqrt{2T}\sigma) e^{-T\sigma^2}
=-\sqrt{\frac2\pi}i\sigma \int_0^T e^{-\xi\sigma^2} u(T-\xi)\, d\xi.
\end{equation*}
Note that $u_l^n\to\delta^{(n)}$ as $l\to\infty$ in $\EuScript D'$ for each $n=\overline{0,\infty}$ ($\delta$ is the Dirac distribution).
Then we find controls $u_N$, $N\in\N$, solving the approximate reachability problem, in the form
\begin{equation*}
u_N=\sum_p^N U_p^N u_{l_p^N}^p,
\end{equation*}
where $U_p^N\ge0$ is a constant, $p=\overline{0,N}$. The results of this section are illustrated by Example \ref{ex3} of Section \ref{ex}.
In Section \ref{c}, using Theorem 3.1 of \cite{SMEZ}, we prove that there is no initial state $W^0\in\THH^0$ that is null-controllable, using controls $u\in L^\infty(0,T)$, in a given time $T>0$. 
In Section \ref{ac},  from Theorem \ref{thappr} of Section \ref{ar} it immediately follows that each initial state $W^0\in\THH^0$ is approximately controllable to any end state $W^T\in\THH^0$, using controls $u\in L^\infty(0,T)$, in a given time $T>0$.

\section{Notation}\label{n}

Introduce the spaces used in the paper. For $m=0,1,2$, denote
\begin{equation*}
H^m=\bbl\varphi\in L^2(\R)\mid \forall k=\overline{0,m}\ \varphi^{(k)}\in L^2(\R)\bbr
\end{equation*}
with the norm
\begin{equation*}
\nnl \varphi\nnr^m=\sqrt{\sum_{k=0}^m \binom m k \pl\nnl \varphi^{(k)}\nnr_{L^2(\R)} \pr^2},
\end{equation*}
and $H^{-m}=\pl H^m\pr^*$ with the strong norm $\nnl\cdot\nnr^{-m}$ of the adjoint space. We have $H^0=L^2(\R)=\pl H^0\pr^*=H^{-0}$.

For $n=\overline{-2,2}$, denote
\begin{equation*}
H_n=\bbl \psi\in L_{\text{loc}}^2(\R)\mid \pl1+\sigma^2 \pr^{n/2}\psi\in L^2(\R)\bbr
\end{equation*}
with the norm
\begin{equation*}
\nnl \psi\nnr_n=\nnl\pl1+\sigma^2 \pr^{n/2}\psi \nnr_{L^2(\R)}.
\end{equation*}
Evidently, $H_{-n}=\pl H_n\pr^*$.

By $\FFF: H^{-2}\to H_{-2}$, denote the Fourier transform operator with the domain $H^{-2}$. This operator is an extension of the classical Fourier transform operator being an isometric isomorphism of $L^2(\R)$. The extension is given by the formula
\begin{equation*}
\langle \FFF f,\varphi\rangle=\langle f,\FFF^{-1}\varphi\rangle,\quad f\in H^{-2},\ \varphi\in H_2.
\end{equation*}
This operator is an isometric isomorphism of $H^m$ and $H_m$, $m=\overline{-2,2}$ \cite[Chap. 1]{VG}.

A distribution $f\in H^{-2}$ (or $H_{-2}$) is said to be \emph{odd}  if
$\langle f,\varphi(\cdot)\rangle=-\langle f,\varphi(-(\cdot))\rangle$, $\varphi\in H^2$ (or $H_2$ respectively).

By $\THH^n$, denote the subspace of all odd distributions in $H^n$, $n=\overline{-2,2}$. Evidently, $\THH^n$ is a closed subspace of $H^n$, $n=\overline{-2,2}$.

Note that, for $\varphi \in H_\zco^m$, its odd extension $\varphi(\cdot)-\varphi(-(\cdot))$ belongs to $\THH^m$, $m=0,1,2$. But, for $m=1,2$, the converse assertion is not true. That is why the odd extension of a distribution $f\in H_\zco^{-m}$ may not belong to $\THH^{-m}$, $m=1,2$. However the following theorem holds.

\begin{theorem}[\kern-1ex\cite{LVF0}]
Let $f\in H_\zco^0$ and $f(0^+)\in\R$. Then $f''\in H_\zco^{-2}$ can be extended to the odd distribution $F$, and $F\in \THH^{-2}$. This distribution is given by the formula
\begin{equation}
\label{oddext}
F=\bigl(f(\cdot)-f(-(\cdot)) \bigr)''+2f(0^+)\delta',
\end{equation}
where $\delta$  is the Dirac distribution.
\end{theorem}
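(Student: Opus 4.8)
The plan is to verify the formula \eqref{oddext} directly by testing against an arbitrary odd test function $\varphi\in H_2$, using the definition of the distributional second derivative together with an integration-by-parts argument that keeps track of the boundary term at $0^+$. First I would recall that for $f\in H_\zco^0$ with $f(0^+)\in\R$, the distribution $f''\in H_\zco^{-2}$ acts on $\psi\in H_\zco^2$ by $\langle f'',\psi\rangle=\langle f,\psi''\rangle=\int_0^\infty f(x)\psi''(x)\,dx$ (the pairing being the $L^2(0,+\infty)$-duality extended to $H_\zco^{-2}$), since test functions in $H_\zco^2$ vanish together with their first derivatives at $0^+$ so no boundary terms appear. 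The candidate extension $F$ in \eqref{oddext} is manifestly a well-defined element of $H^{-2}$: the odd extension $g:=f(\cdot)-f(-(\cdot))$ lies in $L^2(\R)=H^0$, hence $g''\in H^{-2}$, and $\delta'\in H^{-2}$ as well because $H^2\hookrightarrow C^1$ in one dimension. It is also odd: $g$ is odd so $g''$ is odd, and $\delta'$ is odd, so $F\in\THH^{-2}$.

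Next I would check that $F$ actually extends $f''$, i.e.\ that $\langle F,\varphi\rangle=\langle f'',\varphi|_{(0,\infty)}\rangle$ whenever $\varphi\in H^2$ restricts to an element of $H_\zco^2$ — but more to the point, I would directly compute $\langle F,\varphi\rangle$ for a general odd $\varphi\in H^2$ and match it against what the odd extension of $f''$ ought to be. For odd $\varphi\in H^2$ we have $\langle g'',\varphi\rangle=\langle g,\varphi''\rangle=\int_\R g(x)\varphi''(x)\,dx=2\int_0^\infty f(x)\varphi''(x)\,dx$, using that $g$ and $\varphi''$ are both odd so their product is even. Also $\langle\delta',\varphi\rangle=-\varphi'(0)$. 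Hence $\langle F,\varphi\rangle=2\int_0^\infty f(x)\varphi''(x)\,dx-2f(0^+)\varphi'(0)$. The key computation is then to integrate by parts on $(0,\infty)$: formally $\int_0^\infty f\varphi''=-\int_0^\infty f'\varphi'+[f\varphi']_0^\infty=-\int_0^\infty f'\varphi' - f(0^+)\varphi'(0)$, but since $f$ is only in $L^2$ this must be read the other way — the point is that $\int_0^\infty f(x)\varphi''(x)\,dx$ with $\varphi$ not vanishing at the boundary is precisely the natural pairing of the odd extension of $f''$ with $\varphi$ plus the boundary correction, and one shows $2\int_0^\infty f\varphi'' - 2f(0^+)\varphi'(0)$ is exactly $\langle \widetilde{f''},\varphi\rangle$ where $\widetilde{f''}$ denotes the sought odd extension, characterized by agreeing with $f''$ on test functions supported in $(0,\infty)$ and being odd.

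To make this rigorous without assuming $f$ is smooth, I would argue by density: take $f_k\in C_c^\infty(0,+\infty)\oplus\R\chi$ — or rather approximate $f$ in $H_\zco^0$ by smooth functions $f_k$ with $f_k(0^+)\to f(0^+)$ for which the classical integration by parts is valid and the formula \eqref{oddext} can be checked by hand, then pass to the limit using continuity of all three terms $g''$, $\delta'$, and the evaluation $f\mapsto f(0^+)$ in the relevant topologies. The one subtlety is that $f\mapsto f(0^+)$ is not continuous on all of $H_\zco^0=L^2(0,+\infty)$, so the approximating sequence must be chosen to converge in a slightly finer sense (e.g.\ in $H^1_{loc}$ near $0$, or one simply treats the hypothesis ``$f(0^+)\in\R$'' as built-in data and approximates within the affine subspace of functions with that prescribed trace).

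The main obstacle I anticipate is precisely this boundary-trace bookkeeping: making sense of $f(0^+)$ and of the integration by parts $\int_0^\infty f\varphi''$ when $f\in L^2(0,+\infty)$ only, and showing the $\delta'$ term with coefficient exactly $2f(0^+)$ is forced — not $f(0^+)$ or $4f(0^+)$ — which comes down to carefully distinguishing the ``pairing with the odd extension'' (which double-counts the half-line) from the ``restriction to a half-line test function'' (which does not) and tracking the single boundary term $f(0^+)\varphi'(0)$ through this doubling. Everything else — oddness of $F$, membership in $H^{-2}$, and the verification that $F$ restricts to $f''$ — is routine once the computation $\langle F,\varphi\rangle=2\int_0^\infty f\varphi''-2f(0^+)\varphi'(0)$ is in hand.
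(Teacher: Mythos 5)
Note first that the paper does not prove this statement at all: it is imported from \cite{LVF0}, so there is no in-paper argument to compare yours with; I can only judge your sketch on its own terms and against the way the paper actually uses the theorem. Judged that way, there is a genuine gap, and it sits exactly where you predicted it would. You never fix what ``$F$ extends $f''$'' means, and the only characterization you offer --- $F$ odd, $F\in H^{-2}$, $F$ agreeing with $f''$ on test functions supported in $(0,+\infty)$ --- cannot determine the coefficient of $\delta'$ at all: $\delta'$ is itself odd, lies in $H^{-2}$, and is supported at the origin, so $F+c\,\delta'$ satisfies the same conditions for every $c\in\R$. Pairing against odd extensions $\Psi$ of test functions $\psi\in H_\zco^2$ does not help either, because such $\Psi$ have $\Psi'(0)=\psi'(0^+)=0$, so the $\delta'$ term is invisible to them. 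Hence your plan to show that the coefficient $2f(0^+)$ is ``forced'' cannot be carried out from the ingredients you list; what is missing is precisely the definition (continuity of the extension map on a suitable class, or consistency with smooth $f$) that pins the coefficient down, and that is the actual content of the theorem in \cite{LVF0}.

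Worse, if you adopt the natural normalization --- the extension must coincide with the pointwise odd extension of $f''$ when $f$ is regular, which is also the normalization the paper itself uses, since the derivation of \eqref{eq1} needs ``odd extension of $w_{xx}$'' $=W_{xx}-2u\delta'$ and the boundary verification \eqref{soll} confirms that sign --- then your own computation, completed by one integration by parts (using $\varphi(0)=0$ for odd $\varphi$), gives
\begin{equation*}
\bigl\langle g''+2f(0^+)\delta',\varphi\bigr\rangle
=2\int_0^\infty f\varphi''-2f(0^+)\varphi'(0)
=2\int_0^\infty f''\varphi-4f(0^+)\varphi'(0),
\end{equation*}
with $g=f(\cdot)-f(-(\cdot))$, i.e.\ the stated $F$ is the classical odd extension of $f''$ shifted by $4f(0^+)\delta'$, whereas the extension consistent with \eqref{eq1} is $g''-2f(0^+)\delta'$. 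A one-line test makes this concrete: for $f(x)=e^{-x}$ one has $g(x)=\sgn(x)e^{-|x|}$ and $g''=g+2\delta'$, so the odd extension of $f''=e^{-x}$ (namely $g$) is $g''-2\delta'$, not $g''+2\delta'$. So the verification you outline, carried out honestly, does not confirm \eqref{oddext} as printed; it exposes a sign discrepancy (misprint in \eqref{oddext}, or a different pairing convention in \cite{LVF0}) that your sketch anticipated but leaves unresolved. The density/trace issue you flag is, by comparison, secondary: approximating $f$ while controlling $f(0^+)$ is routine once the defining property of the extension is stated, but without that property there is nothing for the limit argument to converge to.
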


\section{Preliminary}\label{p}

Consider control problem \eqref{eq}--\eqref{ic}. Let $W^0$ and $W(\cdot,t)$ be the odd extensions of $w^0$ and $w(\cdot,t)$ with respect to $x$, $t\in [0,T]$. If $w$ is a solution to problem \eqref{eq}--\eqref{ic}, then $W$ is a solution to the following problem
\begin{align}
\label{eq1}
&W_t=W_{xx}-2u\delta',&& x\in \R, \ t\in(0,T),\\
\label{ic1}
&W(\cdot,0)=W^0, && x\in\R,
\end{align}
according to Theorem \ref{oddext}. Here  $W^0\in \THH^0$, $\pl\frac d{dt}\pr^m W:[0,T]\to\THH^{-2m}$, $m=0,1$, $\delta$ is the Dirac distribution with respect to $x$. The converse assertion is also true: if $W$ is a solution to \eqref{eq1}, \eqref{ic1}, then its restriction $w=\left.W\right|_{(0,+\infty)}$ is a solution to \eqref{eq}--\eqref{ic} and
\begin{equation}
\label{bc1}
W(0^+,t)=u(t)\quad\text{a.e. on }[0,T]
\end{equation}
(see below \eqref{soll}). Evidently, \eqref{ec} holds iff
\begin{equation}
\label{ec1}
W(\cdot,T)=W^T
\end{equation}
holds where $W^T$ is the odd extension of $w^T$.

Consider control problem \eqref{eq1}, \eqref{ic1}. Denote $V^0=\FFF W^0$ and $V(\cdot,t)=\FFF_{x\to\sigma} W(\cdot,t)$, $t\in[0,T]$. We have
\begin{align}
\label{eq2}
&V_t=-i\sigma V-\sqrt{\frac2\pi}i\sigma\,u,&& \sigma\in \R, \ t\in(0,T),\\
\label{ic2}
&V(\cdot,0)=V^0, && \sigma\in\R.
\end{align}
Therefore,
\begin{equation}
\label{sol2}
V(\sigma,t)=e^{-t\sigma^2}V^0(\sigma)-\sqrt{\frac2\pi}i\sigma \int_0^t e^{-(t-\xi)\sigma^2} u(\xi)\, d\xi,\quad\sigma\in\R,\ t\in[0,T],
\end{equation}
is the unique solution to \eqref{eq2}, \eqref{ic2}. Since $u\in L^\infty(0,T)$, we have
\begin{equation}
\label{est}
|V(\sigma,t)|\le |V^0(\sigma)|+\sqrt{\frac2\pi}\nnl u\nnr_{L^\infty(0,T)}
\frac{1-e^{-t\sigma^2}}{|\sigma|},\quad \sigma\in\R,\ t\in[0,T].
\end{equation}
Hence $V(\cdot,t)\in \THH^0$, $t\in[0,T]$. From \eqref{sol2}, we obtain
\begin{equation}
\label{sol1}
W(x,t)=\frac{e^{-\frac{x^2}{4t}}}{\sqrt{4\pi t}}*W^0(x)
+\sqrt{\frac2\pi}x\int_0^t e^{-\frac{x^2}{4\xi}}\frac{u(t-\xi)}{(2\xi)^{3/2}}d\xi.
\end{equation}
Since for any $t\in(0,T]$ the function $\frac{e^{-\frac{x^2}{4t}}}{\sqrt{2t}}*W^0(x)$ is odd and continuous, we obtain
\begin{equation*}
\frac{e^{-\frac{x^2}{4t}}}{\sqrt{2t}}*W^0(x)\to 0\quad\text{as }x\to 0^+.
\end{equation*}
Setting $\mu=\frac{|x|}{2\sqrt\xi}$, we get
\begin{equation*}
x\int_0^t e^{-\frac{x^2}{4\xi}}\frac{u(t-\xi)}{(2\xi)^{3/2}}d\xi
=\sqrt2\sgn x \int_{|x|/(2\sqrt t)}^\infty e^{-\mu^2} u\pl t-\frac{x^2}{4\mu^2}\pr d\mu.
\end{equation*}
According to Lebesgue's dominated convergence theorem, we get
\begin{equation}
\label{soll}
W(0^+,t)=\frac2{\sqrt \pi} u(t) \int_0^\infty e^{-\mu^2}=u(t)\quad\text{a.e. on }[0,T],
\end{equation}
i.e. \eqref{ec1} holds.

Thus control systems \eqref{eq}--\eqref{ic} and \eqref{eq1}, \eqref{ic1} are equivalent. That is why, further, we consider control system  \eqref{eq1}, \eqref{ic1} instead of original system \eqref{eq}--\eqref{ic}.

\section{Reachability}\label{r}

\begin{definition}
For control system \eqref{eq1}, \eqref{ic1}, a state $W^T\in\THH^0$ is said to be reachable from a state $W^0\in\THH^0$ in a given time
$T>0$ if there exists a control $u\in L^\infty(0,T)$ such that there exists a unique solution to \eqref{eq1}, \eqref{ic1}, \eqref{ec1}.
\end{definition}

By $\RR_T(W^0)$ denote the set of all states $W^T\in\THH^0$ reachable from $W^0$ in the time $T$.

According to \eqref{sol1}, we have
\begin{align}
\label{re0}
\RR_T(W^0)=&\bbl W^T\in\THH^0\mid\exists v\in L^\infty(0,T)\right.\nonumber\\
&\left.W^T=\frac{1}{\sqrt{2\pi}}\frac{e^{-\frac{x^2}{4T}}}{\sqrt{2T}}\ast W^0(x)+\sqrt{\frac{2}{\pi}}x
\int_0^T e^{-\frac{x^2}{4\xi}}\frac{v(\xi)}{(2\xi)^{3/2}}d\xi\bbr,
\end{align}
in particular,
\begin{align}
\label{re1}
\RR_T(0)=\bbl W^T\in\THH^0\mid\exists v\in L^\infty(0,T)\ \ W^T=\sqrt{\frac{2}{\pi}}x\int_0^T e^{-\frac{x^2}{4\xi}}\frac{v(\xi)}{(2\xi)^{3/2}}d\xi\bbr.
\end{align}
First, we study $\RR_T(0)$. Denote also
\begin{align}
\label{re2}
\RR_T^L(0)=&\bbl W^T\in\THH^0\mid\exists v\in L^\infty(0,T)\pl\nnl v\nnr_{L^\infty(0,T)}\leq L\right.\right.\nonumber\\
&\left.\left.\wedge W^T=\sqrt{\frac{2}{\pi}}x\int_0^T e^{-\frac{x^2}{4\xi}}\frac{v(\xi)}{(2\xi)^{3/2}}d\xi\pr\bbr.
\end{align}
Evidently, the following theorem holds
\begin{theorem}
\label{reachprop}
We have
\begin{enumerate}[label=\upshape{(\roman*)},ref=\upshape{(\roman*)}]
\item\label{laa1}
$\RR_T(0)=\cup_{L>0}\RR_T^L(0)$;
\item\label{laa2}
$\RR_T^L(0)\subset\RR_T^{L'}(0)$, $L\leq L'$;
\item\label{laa3}
$f\in\RR_T^1(0)\Leftrightarrow Lf\in\RR_T^L(0)$.
\end{enumerate}
\end{theorem}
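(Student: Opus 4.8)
The statement collects three elementary facts about the sets $\RR_T^L(0)$ built from the representation formula \eqref{re1}, so the proof plan is to verify each item directly from the definitions \eqref{re1} and \eqref{re2}, exploiting the linearity of the map $v\mapsto \sqrt{2/\pi}\,x\int_0^T e^{-x^2/(4\xi)}v(\xi)(2\xi)^{-3/2}\,d\xi$ in the control $v$.

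For item \ref{laa1}, the plan is a double inclusion. If $W^T\in\RR_T(0)$, then by \eqref{re1} there is some $v\in L^\infty(0,T)$ realizing it; setting $L=\nnl v\nnr_{L^\infty(0,T)}$ (any value $\ge$ this works, and if $v=0$ take $L=1$) shows $W^T\in\RR_T^L(0)\subset\cup_{L>0}\RR_T^L(0)$. Conversely, each $\RR_T^L(0)$ is by definition contained in $\RR_T(0)$ since the control $v$ witnessing membership in $\RR_T^L(0)$ lies in $L^\infty(0,T)$, so the union is contained in $\RR_T(0)$. Item \ref{laa2} is immediate: if $L\le L'$ then any $v$ with $\nnl v\nnr_{L^\infty(0,T)}\le L$ also satisfies $\nnl v\nnr_{L^\infty(0,T)}\le L'$, so the witnessing control for $\RR_T^L(0)$ also witnesses membership in $\RR_T^{L'}(0)$.

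For item \ref{laa3}, the plan is to use the homogeneity of the integral operator. If $f\in\RR_T^1(0)$ is realized by $v$ with $\nnl v\nnr_{L^\infty(0,T)}\le1$, then $Lv$ has $\nnl Lv\nnr_{L^\infty(0,T)}\le L$ and, by linearity of the operator in the control, $Lv$ realizes $Lf$; hence $Lf\in\RR_T^L(0)$. For the converse, if $Lf$ is realized by some $v'$ with $\nnl v'\nnr_{L^\infty(0,T)}\le L$, then $v'/L$ has sup-norm $\le1$ and realizes $f$, so $f\in\RR_T^1(0)$. (One should note $L>0$ throughout, which is part of the standing hypothesis in \eqref{re2}, so division by $L$ is legitimate.)

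There is essentially no obstacle here — the word "evidently" in the statement signals that this is a bookkeeping lemma. The only point requiring a moment's care is the edge case $v=0$ in item \ref{laa1} (choosing a positive $L$) and the strict positivity of $L$ in item \ref{laa3}, both of which are handled by the conventions already in place. I would write the whole proof in a few lines, invoking only \eqref{re1}, \eqref{re2}, and the linearity of the control-to-state map.
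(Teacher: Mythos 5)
Your proof is correct and matches the paper's intent: the paper states this theorem with the single word ``Evidently'' and gives no proof, and your direct verification from \eqref{re1}--\eqref{re2} together with the linearity (homogeneity) of the control-to-state map in $v$ is exactly the routine argument being signalled. Your attention to the edge cases ($v=0$ and $L>0$) is fine but adds nothing beyond the intended bookkeeping.
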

We can obtain the following necessary condition for $f$ to belong to $\RR_T^L(0)$.
\begin{theorem}
\label{thnec}
If $W^T\in\RR_T^L(0)$, then for any $T^\ast>T$
\begin{equation}
\label{nec}
\int_0^\infty e^\frac{x^2}{4T^\ast}\nl W^T(x)\nr dx\leq L\sqrt{\frac{T^\ast}{\pi}}\ln\frac{\sqrt{T^\ast}+\sqrt{T}}{\sqrt{T^\ast}-\sqrt{T}}.
\end{equation}
\end{theorem}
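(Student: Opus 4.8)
The plan is to argue directly from the integral representation that defines membership in $\RR_T^L(0)$: if $W^T\in\RR_T^L(0)$, there is $v\in L^\infty(0,T)$ with $\nnl v\nnr_{L^\infty(0,T)}\le L$ and
\[
W^T(x)=\sqrt{\frac2\pi}\,x\int_0^T e^{-\frac{x^2}{4\xi}}\frac{v(\xi)}{(2\xi)^{3/2}}\,d\xi ,\qquad x>0 .
\]
Since $x>0$ on $(0,+\infty)$ and the kernel $e^{-x^2/(4\xi)}/(2\xi)^{3/2}$ is positive, the triangle inequality together with $|v(\xi)|\le L$ a.e.\ gives
\[
e^{\frac{x^2}{4T^\ast}}\nl W^T(x)\nr\le L\sqrt{\frac2\pi}\,x\,e^{\frac{x^2}{4T^\ast}}\int_0^T e^{-\frac{x^2}{4\xi}}\frac{d\xi}{(2\xi)^{3/2}} .
\]

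Next I would integrate this in $x$ over $(0,+\infty)$ and interchange the order of integration. As all integrands are nonnegative, Tonelli's theorem legitimizes the swap independently of finiteness, and the inner integral becomes the Gaussian first moment
\[
\int_0^\infty x\exp\!\left(-x^2\Bigl(\tfrac1{4\xi}-\tfrac1{4T^\ast}\Bigr)\right)dx=\frac{2T^\ast\xi}{T^\ast-\xi},
\]
which is finite precisely because $\xi\le T<T^\ast$, so the coefficient of $-x^2$ in the exponent is strictly positive — this is the only place the hypothesis $T^\ast>T$ enters. Substituting this value and simplifying the algebraic factors $\sqrt{2/\pi}\,(2\xi)^{-3/2}\cdot 2T^\ast\xi=T^\ast/(\sqrt\pi\,\sqrt\xi)$ reduces the bound to
\[
\int_0^\infty e^{\frac{x^2}{4T^\ast}}\nl W^T(x)\nr dx\le\frac{LT^\ast}{\sqrt\pi}\int_0^T\frac{d\xi}{\sqrt\xi\,(T^\ast-\xi)} .
\]

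Finally I would evaluate the remaining one–dimensional integral by the substitution $\xi=s^2$, which turns it into $2\int_0^{\sqrt T}(T^\ast-s^2)^{-1}\,ds=\frac1{\sqrt{T^\ast}}\ln\frac{\sqrt{T^\ast}+\sqrt T}{\sqrt{T^\ast}-\sqrt T}$; inserting this yields exactly \eqref{nec}. I do not expect a genuine obstacle here: the argument is a single majorization followed by two elementary integrations. The only points deserving a line of care are the justification of the Fubini/Tonelli interchange (covered by nonnegativity) and the convergence of the $x$-integral at both ends — the integrand behaves like a constant times $x$ near $x=0$ and decays like a Gaussian at infinity since $\xi<T^\ast$ — both of which are automatic.
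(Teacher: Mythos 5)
Your argument is correct and is essentially identical to the paper's proof: bound $|v|$ by $L$, exchange the order of integration (justified by nonnegativity), evaluate the inner Gaussian integral $\int_0^\infty x e^{-x^2(\frac{1}{4\xi}-\frac{1}{4T^\ast})}dx=\frac{2\xi T^\ast}{T^\ast-\xi}$, and compute the remaining $\xi$-integral to get the logarithmic bound. The only cosmetic difference is that you make the Tonelli justification and the substitution $\xi=s^2$ explicit, while the paper performs the same computation without comment.
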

\begin{proof}
Using \eqref{re2}, we have
\begin{align*}
\int_0^\infty e^\frac{x^2}{4T^\ast}\nl W^T(x)\nr dx&\leq\sqrt{\frac{2}{\pi}}L\int_0^\infty e^\frac{x^2}{4T^\ast}x
\int_0^T e^{-\frac{x^2}{4\xi}}\frac{d\xi}{(2\xi)^{3/2}}\\
&=\sqrt{\frac{2}{\pi}}L\int_0^T\frac{1}{(2\xi)^{3/2}}\int_0^\infty e^{-x^2\pl\frac{1}{4\xi}-\frac{1}{4T^\ast}\pr}xdx d\xi\\
&=\frac{L}{\sqrt{2\pi}}\int_0^T\frac{1}{(2\xi)^{3/2}}\frac{1}{\frac{1}{4\xi}-\frac{1}{4T^\ast}}d\xi
=L\sqrt{\frac{T^\ast}{\pi}}\ln\frac{\sqrt{T^\ast}+\sqrt{T}}{\sqrt{T^\ast}-\sqrt{T}}.
\end{align*}
\end{proof}
\begin{theorem}
\label{thmom}
Let $W^T\in\THH^0$ and \eqref{nec} holds. Let
\begin{equation}
\label{mom}
\omega_n=\frac{n!}{(2n+1)!}\int_0^\infty x^{2n+1}W^T(x)dx,\qquad n=\overline{0,\infty}.
\end{equation}
Then, $W^T\in\RR_T^L(0)$ iff there exists $v\in L^\infty(0,T)$ such that $\nnl v\nnr_{L^\infty(0,T)}\leq L$
and
\begin{equation}
\label{mominf}
\int_0^T \xi^n v(\xi)d\xi=\omega_n,\qquad n=\overline{0,\infty}.
\end{equation}
\end{theorem}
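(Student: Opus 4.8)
The plan is to convert the reachability condition \eqref{re2} into the moment problem \eqref{mominf} by expanding the kernel $\frac{1}{(2\xi)^{3/2}}x e^{-x^2/(4\xi)}$ appropriately and then testing against the monomials $x^{2n+1}$. More precisely, I would start from the equality
\begin{equation*}
W^T(x)=\sqrt{\tfrac2\pi}\,x\int_0^T e^{-\frac{x^2}{4\xi}}\frac{v(\xi)}{(2\xi)^{3/2}}\,d\xi
\end{equation*}
and, for each fixed $n$, multiply both sides by $x^{2n+1}$ and integrate over $(0,+\infty)$. Under the hypothesis \eqref{nec} the left-hand side is absolutely integrable against each $x^{2n+1}$ (in fact \eqref{nec} gives integrability against $e^{x^2/(4T^\ast)}$, which dominates every polynomial), so Fubini's theorem applies on the right. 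After interchanging the order of integration I would evaluate the inner integral $\int_0^\infty x^{2n+2} e^{-x^2/(4\xi)}\,dx$ by the substitution $s=x^2/(4\xi)$, obtaining a Gamma-function value proportional to $\xi^{n+3/2}$; combining this with the factor $(2\xi)^{-3/2}$ produces a clean power $\xi^{n}$, and the numerical constants collapse — via the duplication formula $\Gamma(n+3/2)=\frac{(2n+1)!}{4^n n!}\sqrt\pi$ — exactly into the normalizing factor $\frac{n!}{(2n+1)!}$ appearing in \eqref{mom}. This shows $W^T\in\RR_T^L(0)$ implies \eqref{mominf} with the given $\omega_n$, and the bound on $v$ is inherited directly.

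For the converse, suppose $v\in L^\infty(0,T)$ with $\|v\|_{L^\infty(0,T)}\le L$ solves \eqref{mominf}. Set
\begin{equation*}
\widetilde W(x)=\sqrt{\tfrac2\pi}\,x\int_0^T e^{-\frac{x^2}{4\xi}}\frac{v(\xi)}{(2\xi)^{3/2}}\,d\xi ;
\end{equation*}
by \eqref{re2} this $\widetilde W$ lies in $\RR_T^L(0)\subset\THH^0$. Running the same Fubini computation backwards shows that $\widetilde W$ has the same moments as $W^T$: $\int_0^\infty x^{2n+1}\widetilde W(x)\,dx=\int_0^\infty x^{2n+1}W^T(x)\,dx$ for all $n$. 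It then remains to deduce $\widetilde W=W^T$ in $\THH^0$ from the coincidence of all these odd moments. The cleanest route is to observe that $g:=\widetilde W-W^T$ is an odd $L^2(0,+\infty)$ function with $\int_0^\infty x^{2n+1} e^{-x^2/(4T^\ast)}\,\bigl|g(x)\bigr|$-type integrability (both terms satisfy \eqref{nec}) and with $\int_0^\infty x^{2n+1} g(x)\,dx=0$ for all $n\ge0$; since the even function $h(y):=g(\sqrt y)/\sqrt y$ (or, more directly, a Gaussian-weighted version of $g$) is killed by all polynomials and the polynomials are dense in the relevant weighted $L^2$ space, $g\equiv0$. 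Concretely I would use the weight $e^{-x^2/(4T^\ast)}$: the function $x\mapsto g(x)e^{-x^2/(8T^\ast)}$ belongs to $L^2(0,+\infty)$ and is orthogonal to all $x^{2n+1}e^{-x^2/(8T^\ast)}$; the odd Hermite functions $\{\mathcal H_{2n+1}(cx)e^{-x^2/(8T^\ast)}\}$ span the odd subspace of $L^2(\R)$, so $g=0$.

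I expect the main obstacle to be the density/uniqueness step at the end of the converse, i.e. justifying rigorously that vanishing of all odd moments $\int_0^\infty x^{2n+1}g(x)\,dx$ forces $g=0$ in $\THH^0$. One has to be careful that $g$ need not itself decay fast enough for the classical Hamburger/Stieltjes determinacy theorems to apply directly to $g$ without a weight; the estimate \eqref{nec}, which holds for \emph{both} $\widetilde W$ and $W^T$ and hence for $g$, is exactly what supplies the Gaussian-type control needed to pass to a determinate moment problem (or, equivalently, to expand $g$ in the complete orthonormal system of odd Hermite functions and read off that every coefficient vanishes). The forward direction and the Gamma-function bookkeeping are routine once Fubini is justified, so the write-up should emphasize (a) the integrability estimate that legitimizes the interchange of integrals, and (b) the completeness argument that yields uniqueness.
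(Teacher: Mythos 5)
Your forward direction is sound and is essentially the paper's computation in disguise: the paper passes to the Fourier transform, writes $V^T(\sigma)=-\sqrt{2/\pi}\,i\sigma\int_0^T e^{-\xi\sigma^2}v(\xi)\,d\xi$, and matches Taylor coefficients at $\sigma=0$, which amounts to exactly the Gamma-function bookkeeping you carry out directly in the $x$ variable (note only that the duplication formula should read $\Gamma(n+\tfrac32)=\frac{(2n+1)!}{2^{2n+1}n!}\sqrt\pi$; with that constant your Fubini computation does reproduce \eqref{mom} precisely, and Fubini itself is unproblematic since $v$ is bounded and the kernel is Gaussian).

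The genuine gap is in the uniqueness step of your converse. For $g=\widetilde W-W^T$ you know $\int_0^\infty x^{2n+1}g(x)\,dx=0$ for all $n$, but this does \emph{not} say that $g(x)e^{-x^2/(8T^\ast)}$ is orthogonal in $L^2$ to $x^{2n+1}e^{-x^2/(8T^\ast)}$: that inner product equals $\int_0^\infty x^{2n+1}e^{-x^2/(4T^\ast)}g(x)\,dx$, with the Gaussian weight counted twice, and these \emph{weighted} moments are not among your hypotheses. The natural repair --- pairing $g\,e^{+x^2/(8T^\ast)}$ against the Hermite functions $x^{2n+1}e^{-x^2/(8T^\ast)}$ --- is also not immediate, because the available information ($g\in L^2(0,+\infty)$ and $\int_0^\infty e^{x^2/(4T')}|g(x)|\,dx<\infty$ for every $T'>T$, the latter holding for $W^T$ by hypothesis \eqref{nec} and for $\widetilde W$ by Theorem \ref{thnec}) does not imply $g\,e^{x^2/(8T^\ast)}\in L^2$, so completeness of the Hermite system cannot be invoked in the form you state. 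The robust argument, and the one the paper in effect uses, is analyticity: the weighted $L^1$ bound makes the (odd) Fourier transform $\FFF g$ extend to an entire function of $\sigma$, whose odd Taylor coefficients at $0$ are, as in \eqref{drv}, nonzero multiples of the moments $\int_0^\infty x^{2n+1}g(x)\,dx$; since these all vanish and the even ones vanish by oddness, $\FFF g\equiv0$, hence $g=0$. With this replacement your scheme closes and becomes equivalent to the paper's proof, which works entirely on the Fourier side, comparing the power series of $V^T$ with that of $-\sqrt{2/\pi}\,i\sigma\int_0^T e^{-\xi\sigma^2}v(\xi)\,d\xi$ rather than comparing moments in $x$.
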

\begin{proof}
According to \eqref{re2}, $W^T\in\RR_T^L(0)$ iff there exists $v\in L^\infty(0,T)$
such that $\nnl v\nnr_{L^\infty(0,T)}\leq L$ and
$$
W^T=\sqrt{\frac{2}{\pi}}x\int_0^T e^{-\frac{x^2}{4\xi}}\frac{v(\xi)}{(2\xi)^{3/2}}\,d\xi.
$$
Denoting $V^T=\FFF W^T$, we have
\begin{equation*}
V^T(\sigma)=-\sqrt{\frac{2}{\pi}}i\sigma\int_0^T e^{-\xi\sigma^2}v(\xi)d\xi.
\end{equation*}
We see that $V^T(\sigma)$ is an odd entire function. Therefore,
\begin{equation*}
\sum_{n=0}^\infty\frac{\pl V^T\pr^{(2n+1)}(0)}{(2n+1)!}\sigma^{2n+1}=V^T(\sigma)
=-\sqrt{\frac{2}{\pi}}i\sigma\sum_{n=0}^\infty\frac{(-1)^n}{n!}\sigma^{2n}\int_0^T\xi^nv(\xi)d\xi.
\end{equation*}
Since
\begin{equation}
\kern-2ex\pl V^T\pr^{(2n+1)}(0)=\sqrt{\frac{2}{\pi}}\int_0^\infty(-ix)^{2n+1}W^T(x)dx%\nonumber\\
=-i\sqrt{\frac{2}{\pi}}(-1)^n\frac{(2n+1)!}{n!}\omega_n,\label{drv}
\end{equation}
we conclude the assertion of the theorem.
\end{proof}

\begin{theorem}
\label{thmomap}
Let $W^T\in\THH^0$ and \eqref{nec} holds. Let $\{\omega_n\}_{n=0}^\infty$ be defined by \eqref{mom}.
If for each $N\in\N$ there exists $v_N\in L^\infty(0,T)$ such that $\nnl v_N\nnr_{L^\infty(0,T)}\leq L$ and
\begin{equation}
\label{mpf}
\int_0^T\xi^nv_N(\xi)d\xi=\omega_n,\qquad n=\overline{0,N},
\end{equation}
then $W^T\in\overline{\RR_T^L(0)}$ (the closure is considered in $\THH^0$).
\end{theorem}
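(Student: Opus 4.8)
The plan is to avoid constructing an explicit approximating sequence and instead to extract, by weak-$*$ compactness, a single control solving the \emph{infinite} moment problem \eqref{mominf}; Theorem \ref{thmom} then gives $W^T\in\RR_T^L(0)\subset\overline{\RR_T^L(0)}$ immediately. (This route in fact proves the a priori stronger statement $W^T\in\RR_T^L(0)$, so dropping the closure would be admissible.)

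In detail, the closed ball $B_L=\{v\in L^\infty(0,T):\nnl v\nnr_{L^\infty(0,T)}\le L\}$ is weak-$*$ compact by Banach--Alaoglu and, since $L^1(0,T)$ is separable, weak-$*$ sequentially compact. From the sequence $\{v_N\}_{N\in\N}$ provided by the hypothesis we extract a subsequence $v_{N_j}\rightharpoonup^* v$, and weak-$*$ lower semicontinuity of the norm gives $v\in B_L$. Fix an integer $n\ge 0$. Since $\xi\mapsto\xi^n$ belongs to $L^1(0,T)$, weak-$*$ convergence yields $\int_0^T\xi^n v_{N_j}(\xi)\,d\xi\to\int_0^T\xi^n v(\xi)\,d\xi$, while for all $j$ with $N_j\ge n$ the left-hand side equals $\omega_n$ by \eqref{mpf}. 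Hence $\int_0^T\xi^n v(\xi)\,d\xi=\omega_n$ for every $n\ge0$, which is \eqref{mominf}. Since $W^T\in\THH^0$ and \eqref{nec} hold by assumption, Theorem \ref{thmom} applies and gives $W^T\in\RR_T^L(0)$, completing the argument.

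If one prefers an argument that exhibits an explicit approximating sequence (in the spirit of approximate reachability), one can instead put $W_N^T=\sqrt{2/\pi}\,x\int_0^T e^{-x^2/(4\xi)}v_N(\xi)\,(2\xi)^{-3/2}\,d\xi$, which lies in $\RR_T^L(0)$, and show $\FFF W_N^T\to\FFF W^T$ in $L^2(\R)$; this suffices since $\FFF$ is an isometry of $H^0=L^2(\R)$. On a fixed interval $\{|\sigma|\le R\}$ one expands the entire functions $\FFF W_N^T(\sigma)=-\sqrt{2/\pi}\,i\sigma\int_0^T e^{-\xi\sigma^2}v_N(\xi)\,d\xi$ and $\FFF W^T$ (entire because \eqref{nec} forces $e^{c|\cdot|}W^T\in L^1(0,+\infty)$ for every $c>0$, with Taylor coefficients given by the $\omega_n$ via \eqref{drv}) in power series about $0$: they agree through order $2N+1$ by \eqref{mpf}, and the remaining coefficients of $\FFF W_N^T$ are controlled uniformly in $N$ by the bound $\nnl v_N\nnr_{L^\infty(0,T)}\le L$, so the difference tends to $0$ in $L^2(-R,R)$ as $N\to\infty$. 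On $\{|\sigma|>R\}$ the $N$-uniform estimate $|\FFF W_N^T(\sigma)|\le\sqrt{2/\pi}\,L/|\sigma|$ together with $\FFF W^T\in L^2(\R)$ makes the tails uniformly small, and letting $R\to\infty$ concludes.

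The computational parts are routine; the only thing one must be careful about is that the convergence take place in the norm of $\THH^0=L^2(0,+\infty)$ and not merely pointwise or weakly. The weak-$*$ compactness route dispenses with this altogether (it never claims $W_N^T\to W^T$), which is why I would present it as the main proof; in the Fourier-transform route the genuine point is the $N$-uniform tail bound for $\FFF W_N^T$, which is precisely what the constraint $\nnl v_N\nnr_{L^\infty(0,T)}\le L$ is there to supply.
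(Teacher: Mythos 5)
Your main argument is correct, and it is genuinely different from the paper's proof. The paper argues constructively: it takes the states $W_N(\cdot,T)$ generated by the finite-moment controls $v_N$ themselves, passes to the Fourier side, matches power-series coefficients up to order $2N+1$ via \eqref{mpf} on each compact interval $|\sigma|\le a$ (using \eqref{nec} to make $\FFF W^T$ entire with controlled Taylor coefficients), and combines this with an $N$-uniform tail estimate $|\FFF W_N(\sigma,T)|\le\sqrt{2/\pi}\,L\,(1-e^{-T\sigma^2})/|\sigma|$ coming from $\nnl v_N\nnr_{L^\infty(0,T)}\le L$ to get $\nnl W^T-W_N(\cdot,T)\nnr^0\to0$; this is exactly your second, fallback sketch, and it is what underlies the paper's numerical examples, since it shows that the finite Markov moment solutions themselves serve as approximating controls. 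Your primary route instead uses Banach--Alaoglu plus separability of $L^1(0,T)$ to extract a weak-$*$ limit $v$ of the $v_N$ in the ball of radius $L$, passes the moment identities to the limit against the $L^1$ test functions $\xi^n$, and then invokes the equivalence of Theorem \ref{thmom} to conclude $W^T\in\RR_T^L(0)$. This is shorter and softer, and it even yields a strictly stronger conclusion (exact membership in $\RR_T^L(0)$ rather than in its closure), which is a nice observation; the price is that it is non-constructive — it gives no rate and no information about how well the explicit controls $v_N$ perform — and it leans entirely on the converse direction of Theorem \ref{thmom}, which is legitimate here since that theorem precedes the present one in the paper (its converse is where the hypothesis \eqref{nec}, guaranteeing that $\FFF W^T$ extends to an entire function determined by the moments $\omega_n$, is actually used). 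Both routes are sound; yours trades the paper's explicit approximation scheme for brevity and a sharper conclusion.
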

\begin{proof}
By $W_N$ denote  the solution to problem \eqref{eq1}, \eqref{ic1} with $W^0=0$ and $u(t)=v_N(T-t)$. Denote also
$V^T=\FFF W^T$, $V_N(\cdot,t)=\FFF_{x\rightarrow\sigma}W_N(\cdot,t)$, $t\in[0,T]$. Then, $V_N$ is the unique
solution to \eqref{eq2}, \eqref{ic2} with $V^0=0$ and the same $u$.
Evidently,
\begin{equation}
\label{mpf0}
\int_a^\infty \nl V^T(\sigma)\nr^2d\sigma\rightarrow 0,\quad\text{as } a\rightarrow\infty.
\end{equation}
Let $T>T^\ast$. Put
\begin{equation*}
W_{T^\ast}=\int_0^\infty e^{\frac{x^2}{4T^\ast}}\nl W^T(x)\nr dx.
\end{equation*}
For $n=\overline{0,\infty}$, we have
\begin{equation}
\pl V^T\pr^{(2n)}(0)=0,\quad  %\label{ord1}\\
\pl V^T\pr^{(2n+1)}(0)=(-1)^ni\sqrt{\frac{2}{\pi}}\int_0^\infty x^{2n+1}W^T(x)dx.%\label{ord2}
\end{equation}
Therefore, using the Stirling formula:
\begin{equation}
\label{stirl}
\sqrt{2\pi}n^{n+\frac{1}{2}}e^{-n}\leq n!\leq en^{n+\frac{1}{2}}e^{-n},\qquad n\in\N,
\end{equation}
we get
\begin{align}
\nl\pl V^T\pr^{(2n+1)}(0)\nr&\leq\sqrt{\frac{2}{\pi}}\int_0^\infty \pl x^{2n+1}e^{-\frac{x^2}{4T^\ast}}\pr \pl e^{\frac{x^2}{4T^\ast}}\nl W^T(x)\nr\pr dx\nonumber\\
&\leq\sqrt{\frac{2}{\pi}}W_{T^\ast}\pl\frac{2n+1}{2e}\pr^{\frac{2n+1}{2}}\pl 4T^\ast\pr^{\frac{2n+1}{2}}\nonumber\\
&
\leq W_{T^\ast}\frac{(2n+1)!}{\pi\sqrt{2n+1}}\pl\frac{2T^\ast e}{2n+1}\pr^\frac{2n+1}{2}.\label{ord3}
\end{align}
Since
\begin{equation*}
\overline{\lim_{n\rightarrow\infty}}\pl\frac{\nl\pl V^T\pr^{(2n+1)}(0)\nr}{(2n+1)!}\pr^\frac{1}{2n+1}\leq
\lim_{n\rightarrow\infty}\pl\frac{W_{T^\ast}}{\pi\sqrt{2n+1}}\pr^\frac{1}{2n+1}\sqrt{\frac{2T^\ast e}{2n+1}}=0,
\end{equation*}
we can continue $V^T$ to an odd entire function. Hence
\begin{equation}
\label{mpf1}
V^T(\sigma)=\sum_{n=0}^\infty\frac{\pl V^T\pr^{(2n+1)}(0)}{(2n+1)!}\sigma^{2n+1},\qquad \sigma\in\R.
\end{equation}
Due to \eqref{est}, we get
\begin{equation}
\label{mpf2}
\nl V_N(\sigma,T)\nr\leq\sqrt{\frac{2}{\pi}}L\frac{1-e^{-T\sigma^2}}{|\sigma|}.
\end{equation}
Hence,
\begin{align}
\int_a^\infty\nl V_N(\sigma,T)\nr^2 d\sigma &\leq\frac{2}{\pi}L^2\int_a^\infty\nl\frac{1-e^{-T\sigma^2}}{\sigma}\nr d\sigma\leq\frac{8L^2}{\pi}\int_a^\infty\frac{d\sigma}{\sigma^2}\nonumber\\
&
=\frac{8L^2}{\pi a}\to  0\quad\text{ as } a\rightarrow\infty.\label{mpf3}
\end{align}
According to \eqref{sol2}, we get
\begin{align}
\label{mpf11}
V_N(\sigma,T)&=-\sqrt{\frac{2}{\pi}}i\sigma\int_0^T e^{-\xi\sigma^2}v_N(\xi)d\xi
\nonumber\\
&=-i\sqrt{\frac{2}{\pi}}\sum_{n=0}^\infty\frac{(-1)^n}{n!}\sigma^{2n+1}\int_0^T\xi^n v_N(\xi)d\xi.
\end{align}
Due to \eqref{mpf}, we obtain
\begin{align}
V^T(\sigma)&-V_N(\sigma,T)\nonumber\\
&=\sum_{n=N+1}^\infty\sigma^{2n+1}\bl\frac{\pl V^T\pr^{(2n+1)}(0)}{(2n+1)!}-i\sqrt{\frac{2}{\pi}}\frac{(-1)^{n+1}}{n!}\int_0^T\xi^n v_N(\xi)d\xi\br.\label{mpf4}
\end{align}
With regard to \eqref{ord3} and using \eqref{stirl}, we get
\begin{align*}
\nl\frac{\pl V^T\pr^{(2n+1)}(0)}{(2n+1)!}\nr
\leq\frac{W_{T^\ast}}{\pi\sqrt{2n+1}}\pl\frac{2T^\ast e}{2n+1}\pr^\frac{2n+1}{2}
&\leq\frac{W_{T^\ast}e^{3/2}}{\pi n!\sqrt{2n+1}}\pl\frac{2T^\ast n}{2n+1}\pr^\frac{2n+1}{2}\\
&\leq\frac{W_{T^\ast}e^{3/2}}{\pi n!\sqrt{2n+1}}\pl\sqrt{T^\ast}\pr^{2n+1}.
\end{align*}
Therefore, for $|\sigma|\leq a$,
\begin{equation*}
%\label{mpf5}
\kern-1.5ex\nl\sum_{n=N+1}^\infty\kern-1.2ex\frac{\pl V^T\pr^{(2n+1)}(0)}{(2n+1)!}\sigma^{2n+1}\nr
\leq\frac{e^{3/2}W_{T^\ast}}{\pi}\kern-1.5ex
\sum_{n=N+1}^\infty\kern-1.2ex \frac{\pl\sqrt{T^\ast}a\pr^{2n+1}}{n!\sqrt{2n+1}}\to 0\ \text{as } N\to\infty
\end{equation*}
and
\begin{equation*}
%\label{mpf6}
\sqrt{\frac{2}{\pi}}\nl\sum_{n=N+1}^\infty\kern-1,2ex\frac{(-1)^{n+1}}{n!}\sigma^{2n+1}\int_0^T\xi^n v_N(\xi)d\xi\nr
\leq\sqrt{\frac{2}{\pi}}L\kern-1.2ex\sum_{n=N+1}^\infty\kern-1.2ex\frac{a^{2n+1}T^{n+1}}{(n+1)!}\to 0
\end{equation*}
as $N\to\infty$.
Taking into account \eqref{mpf4}, we get
\begin{equation*}
S_N(a)=\sup_{\sigma\in[-a,a]}\nl V^T(\sigma)-V_N(\sigma,T)\nr\to  0\quad\text{ as } N\rightarrow\infty.
\end{equation*}
Therefore,
\begin{equation}
\label{mpf7}
\int_{-a}^a\nl V^T(\sigma)-V_N(\sigma,T)\nr^2 d\sigma\leq 2a\pl S_N(a)\pr^2\to  0\quad\text{ as } N\rightarrow\infty.
\end{equation}
With regard to \eqref{mpf0}, \eqref{mpf3} and \eqref{mpf7}, we obtain
\begin{equation*}
\nnl W^T(\sigma)-W_N(\sigma,T)\nnr^0=\nnl V^T(\sigma)-V_N(\sigma,T)\nnr_0\to  0\quad\text{ as } N\rightarrow\infty,
\end{equation*}
%Hence,
%\begin{equation*}
%\nnl W^T(\sigma)-W_N(\sigma,T)\nnr^0\to  0\quad\text{ as } N\rightarrow\infty,
%\end{equation*}
i.e., $W^T\in\overline{\RR_T^L(0)}$.
\end{proof}

%%%%%%%%%%%%%%%%%%%%%%%%%%%%%%%%%%%%%%%%%%%%%%%%%%%%%%%%

The last theorem is illustrated  by examples in Section \ref{ex} (see Examples \ref{ex1} and \ref{ex2}).

\section{Approximate reachability}\label{ar}

\begin{definition}
For control system  \eqref{eq1}, \eqref{ic1}, a state $W^T\in\THH^0$ is said to be approximately reachable  from a state $W^0\in\THH^0$
in a given time $T>0$ if $W^T\in\overline{\RR_T(W^0)}$, where the closure is considered in the space $\THH^0$.
\end{definition}

In other words, a state $W^T\in\THH^0$ is  approximately reachable from a state $W^0\in\THH^0$
in a given time $T>0$ iff for each $\varepsilon>0$ there exists $u_\varepsilon\in L^\infty(0,T)$ such that there exists a unique solution $W$ to \eqref{eq1}, \eqref{ic1} with $u=u_\varepsilon$ and $\nnl W(\cdot,T) -W^T \nnr^0<\varepsilon$.

\begin{theorem}
\label{thappr}
Each state $W^T\in\THH^0$ is approximately reachable from the origin in a given time $T>0$.
\end{theorem}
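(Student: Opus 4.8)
The plan is to argue by duality. By \eqref{re1}, $\RR_T(0)$ is the image of the linear map sending $v\in L^\infty(0,T)$ to $\sqrt{2/\pi}\,x\int_0^T e^{-x^2/(4\xi)}v(\xi)(2\xi)^{-3/2}\,d\xi\in\THH^0$, so it is a linear subspace of the Hilbert space $\THH^0$, and the assertion $\overline{\RR_T(0)}=\THH^0$ is equivalent to $\RR_T(0)^{\perp}=\{0\}$. I would therefore fix $\phi\in\THH^0$ with $\phi\perp\RR_T(0)$ and prove $\phi=0$. Passing to the Fourier side, the reachable states satisfy $\FFF W^T(\sigma)=-\sqrt{2/\pi}\,i\sigma\int_0^T e^{-\xi\sigma^2}v(\xi)\,d\xi$ (the identity used in the proof of Theorem \ref{thmom}), hence, by Parseval, for every $v\in L^\infty(0,T)$
\[\langle W^T,\phi\rangle=\langle\FFF W^T,\FFF\phi\rangle=-\sqrt{\tfrac2\pi}\,i\int_0^T v(\xi)\,G(\xi)\,d\xi,\qquad G(\xi):=\int_\R \sigma e^{-\xi\sigma^2}\,\overline{\FFF\phi(\sigma)}\,d\sigma,\]
where the Fubini interchange is legitimate because $\FFF\phi\in L^2(\R)$ and $\int_0^T\|\sigma e^{-\xi\sigma^2}\|_{L^2(\R)}\,d\xi<\infty$. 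Since $v$ is an arbitrary $L^\infty$-function, $\phi\perp\RR_T(0)$ forces $G\equiv0$ a.e.\ on $(0,T)$.

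From $G\equiv0$ on $(0,T)$ I would extract $\phi=0$ in two steps. First, by the same Cauchy--Schwarz estimate ($\sigma e^{-\xi\sigma^2}\in L^2(\R)$, locally uniformly for $\Re\xi>0$) the integral defining $G$ is holomorphic in the half-plane $\{\Re\xi>0\}$, so $G\equiv0$ on $(0,T)$ already gives $G\equiv0$ on $(0,+\infty)$. Second, $\phi$ odd implies $\FFF\phi$ odd, so $\sigma\overline{\FFF\phi(\sigma)}$ is even and the substitution $s=\sigma^2$ turns $G(\xi)$ into $\tfrac12\int_0^{\infty}e^{-\xi s}\,\overline{\FFF\phi(\sqrt s)}\,ds$, a Laplace transform that converges absolutely for every $\xi>0$; by the uniqueness theorem for the Laplace transform $\overline{\FFF\phi(\sqrt s)}=0$ for a.e.\ $s>0$, i.e.\ $\FFF\phi=0$ a.e.\ on $(0,+\infty)$, and oddness then gives $\FFF\phi\equiv0$, whence $\phi=0$.

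The step I expect to require the most care is the Fourier bookkeeping: $\FFF W^T$ decays only like $1/|\sigma|$ at infinity (cf.\ \eqref{est}), so each interchange of integrals must be justified using the $L^\infty$-bound on $v$ together with $\FFF\phi\in L^2(\R)$, and the appeal to Laplace uniqueness must use that the Gaussian factor $e^{-\xi\sigma^2}$ keeps $G(\xi)$ finite for \emph{all} $\xi>0$, not merely on a half-line. As an alternative, more constructive route --- closer to the Hermite-expansion scheme announced in the introduction --- one can expand $W^T=\sum_{n=0}^{\infty}c_n\psi_n^T$ in the complete orthogonal system $\{\psi_n^T\}_{n=0}^\infty$ of $\THH^0$, show that each $\psi_n^T\in\overline{\RR_T(0)}$ by exhibiting $L^\infty$-controls concentrating near $t=0$ (mollified derivatives of the Dirac distribution) whose reached states converge to $\psi_n^T$ in $\THH^0$, and finish with a diagonal argument; there the crux is a uniform estimate of the high-frequency part of the reached states, since the $L^\infty$-norms of the approximating controls tend to infinity.
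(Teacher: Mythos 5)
Your duality argument is correct, but it is genuinely different from the paper's proof. You reduce the statement to $\RR_T(0)^{\perp}=\{0\}$ in the Hilbert space $\THH^0$ and kill an orthogonal $\phi$ by showing that $G(\xi)=\int_\R\sigma e^{-\xi\sigma^2}\overline{\FFF\phi(\sigma)}\,d\sigma$ vanishes a.e.\ on $(0,T)$, extends holomorphically to $\{\Re\xi>0\}$, hence vanishes there, and then invoking Laplace-transform uniqueness after the substitution $s=\sigma^2$ (your justifications — $\|\sigma e^{-\xi\sigma^2}\|_{L^2}\sim C\xi^{-3/4}\in L^1(0,T)$ for the Fubini step and $G\in L^1(0,T)$, absolute convergence of the Laplace integral for every $\xi>0$ via Cauchy--Schwarz with the weight $s^{\pm1/4}$ — are exactly the points that need checking, and they go through; the constant in front of the Laplace integral should be $1$ rather than $\tfrac12$ because the integrand is even, but this is harmless). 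The paper instead argues constructively: it expands $W^T$ in the orthogonal basis $\{\psi_n^T\}$ of scaled odd Hermite functions, approximates on the Fourier side each $\varphi_p(\sigma)=\sigma^{2p+1}e^{-T\sigma^2}$ by $\varphi_p^l$ generated by explicit piecewise-constant controls $u^p_l$ (discrete analogues of $\delta^{(p)}$), and closes the argument with quantitative Stirling-type estimates of the coefficients $h_p^n$ and of $E_N$, which yields the explicit approximating controls \eqref{contr} used in the Remark and in Example \ref{ex3}. Your soft route is shorter and requires no basis computations or Stirling estimates, but it is purely existential: it gives no controls, no bound on $\|u\|_{L^\infty}$, and no rate, whereas the paper's scheme (essentially your sketched ``alternative route'') produces all of these; if you pursue that constructive variant, the crux is indeed the uniform control of the tail $\sum_{n>N}$ and of $\|\varphi_p-\varphi_p^l\|_0$, since the $L^\infty$-norms of $u^p_l$ blow up as $l\to\infty$.
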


First we consider an orthogonal basis in $L^2(\R)$.
Let $\psi_n(x)=\mathcal H_n(x) e^{-\frac{x^2}{2}}$, $x\in\R$, $n=\overline{0,\infty}$, where
\begin{equation*}
\mathcal H_n(x)=(-1)^n e^{x^2}\pl\frac{d}{dx}\pr^ne^{-x^2}=n!\sum_{m=0}^{\bl\frac{n}{2}\br}\frac{(-1)^m}{m!(n-2m)!}(2x)^{n-2m}
\end{equation*}
is the Hermite polynomial, $[\cdot]$ is the integer part of a real number. It is well known \cite{HB} that
\begin{equation}
\label{ort}
\int_{-\infty}^\infty\psi_n(x)\psi_m(x)dx=\sqrt{\pi}2^nn!\delta_{mn},\qquad 0\leq m<n<+\infty,
\end{equation}
where $\delta_{mn}$ is the Kronecker delta, and $\{\psi_n\}_{n=0}^\infty$ is an orthogonal basis in $L^2(\R)$.
It is easy to see that
\begin{equation}
\label{ft}
\FFF\psi_n=(-i)^n\psi_n,\qquad n=\overline{0,\infty}.
\end{equation}
Define
\begin{align*}
\psi_n^T(x)&=\psi_{2n+1}\pl\frac{x}{\sqrt{2T}}\pr,&& x\in\R,\ n=\overline{0,\infty},\\
\widehat{\psi}_n^T(\sigma)&=\pl\FFF\psi_n^T\pr(\sigma)=(-1)^{n+1}i\sqrt{2T}\psi_{2n+1}(\sqrt{2T}\sigma),&& \sigma\in\R,\ n=\overline{0,\infty}.
\end{align*}
According to \eqref{ort},we get
\begin{equation}
\label{ortt}
\langle\psi_n^T,\psi_m^T\rangle=\langle\widehat{\psi}_n^T,\widehat{\psi}_m^T\rangle=\sqrt{2\pi T}2^{2n+1}(2n+1)!\delta_{mn},\qquad 0\leq m<n<+\infty.
\end{equation}
Obviously, $\{\psi_n^T\}_{n=0}^\infty$ and $\{\widehat{\psi}_n^T\}_{n=0}^\infty$ are orthogonal bases in $\THH^0$.
Therefore, for $f\in\THH^0$
\begin{equation*}
f=\sum_{n=0}^\infty f_n\psi_n^T,\quad\FFF f=\sum_{n=0}^\infty f_n\widehat{\psi}_n^T,\qquad\text{where }
f_n=\frac{\langle f,\psi_n^T\rangle}{\langle\psi_n^T,\psi_n^T\rangle}=\frac{\langle\FFF f,\widehat{\psi}_n^T\rangle}{\langle\psi_n^T,\psi_n^T\rangle},
\end{equation*}
and
\begin{equation}
\label{par}
\sum_{n=0}^\infty |f_n|^2\langle\widehat{\psi}_n^T,\widehat{\psi}_n^T\rangle=\sqrt{2\pi T}\sum_{n=0}^\infty |f_n|^22^{2n+1}(2n+1)!.
\end{equation}

Consider also the operator $\Phi_T:L^2(\R)\rightarrow\THH^0$ with the domain 
$D(\Phi_T)=\{g\in L^\infty(\R):\supp g\subset[0,T]\}$, acting by the rule
\begin{equation*}
\Phi_Tg=\sqrt{\frac{2}{\pi}}\FFF^{-1}\pl i\sigma\int_{-\infty}^\infty e^{-\sigma^2(T-\xi)}g(\xi)d\xi\pr,\quad g\in D(\Phi_T).
\end{equation*}
Evidently,
\begin{equation*}
\nnl\FFF\Phi_Tg\nnr_0\leq\nnl g\nnr_{L^\infty(\R)}\pl\frac{2^5T}{\pi}\pr^\frac{1}{4}.
\end{equation*}
Taking into account \eqref{sol2}, we obtain that $ W^T\in\overline{\RR_T(0)}$ iff
\begin{equation}
\label{ns}
\exists\{u_n\}_{n=1}^\infty\subset L^\infty(0,T)\quad \nnl W^T+\Phi_T u_n\nnr^0\to 0\quad\text{as } n\to\infty.
\end{equation}

Denote
\begin{align}
\kern-1.5ex&\varphi_n(\sigma)=\sigma^{2n+1}e^{-T\sigma^2},&&\kern-1ex\sigma\in\R,\nonumber\\
\kern-1.5ex&\varphi_n^l(\sigma)=\sigma^{2n+1}e^{-T\sigma^2}\pl\frac{e^{\sigma^2/l}-1}{\sigma^2/l}\pr^{n+1},
&&\kern-1ex\sigma\in\R,\nonumber\\
\kern-1.5ex&u_l^n(\xi)=\begin{cases}
(-1)^{n-j}{\binom{n}{j}} l^{n+1},&\kern-1ex \xi\in\pl\frac{j}{l},\frac{j+1}{l}\pr,\ 
j=\overline{0,n}\\
0,& \kern-1ex\xi\notin\bl 0,\frac{n+1}{l}\br
\end{cases},&&\kern-1ex l\in\N,\ n\in\N\cup\{0\}.\label{contrl}
\end{align}
Then, $\FFF\Phi_Tu_l^n=\sqrt{\frac{2}{\pi}}i\varphi_n^l$.
Figure \ref{fig:5} illustrates the  functions $u_l^n$.
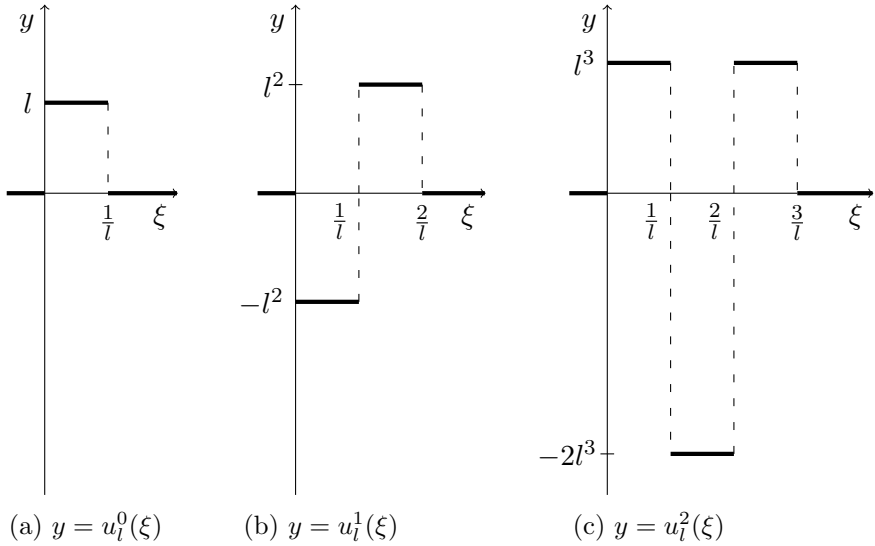
\begin{figure}[ht]
\begin{center}
\begin{subfigure}[b]{0.20\linewidth}
\centering
\begin{tikzpicture}
\draw[ultra thick] (-0.5,0) -- (0,0);
\draw (0,0) -- (0.833,0);
\draw[ultra thick] (0.833,0) node [below]{$\frac{1}{l}$} -- (1.74,0);  
\draw[->] (1.74,0) -- (1.75,0) node [below left]{$\xi$};  
\draw[->] (0,-4) -- (0,2.5) node [below left]{$y$};   
\draw[ultra thick] (0,1.2) node [left]{$l$} -- (0.833,1.2);
\draw[loosely dashed] (0.833,1.2) -- (0.833,0);
\end{tikzpicture}
\centering \caption{\parbox[t]{0.6\textwidth}{$y=u_l^0(\xi)$}}
\label{fig:5.1}
\end{subfigure}
\begin{subfigure}[b]{0.30\linewidth}
\centering
\begin{tikzpicture}
\draw[ultra thick] (-0.5,0) -- (0,0);
\draw (0,0) -- (1.666,0);
\draw[ultra thick] (1.666,0) node [below]{$\frac{2}{l}$} -- (2.49,0); 
\draw[->] (2.49,0) -- (2.5,0) node [below left]{$\xi$}; 
\draw[->] (0,-4) -- (0,2.5) node [below left]{$y$};   
\draw[ultra thick] (0,-1.44) node [left]{$-l^2$} -- (0.833,-1.44);
\draw[loosely dashed] (0.833,-1.44) -- (0.833,0) node [below left]{$\frac{1}{l}$};
\draw[loosely dashed] (0.833,0) -- (0.833,1.44);
\coordinate [label=left:$l^2$] ($l^2$) at (0,1.44);
\draw (-0.09,1.44) -- (0.09,1.44);
\draw[ultra thick] (0.833,1.44) -- (1.666,1.44);
\draw[loosely dashed] (1.666,1.44) -- (1.666,0);
\end{tikzpicture}
\centering \caption{\parbox[t]{0.6\textwidth}{$y=u_l^1(\xi)$}}
\label{fig:5.2}
\end{subfigure}
\begin{subfigure}[b]{0.35\linewidth}
\centering
\begin{tikzpicture}
\draw[ultra thick] (-0.5,0) -- (0,0);
\draw (0,0) -- (2.5,0);
\draw[ultra thick] (2.5,0) node [below]{$\frac{3}{l}$} -- (3.49,0); 
\draw[->] (3.49,0) -- (3.5,0) node [below left]{$\xi$}; 
\draw[->] (0,-4) -- (0,2.5) node [below left]{$y$}; 
\draw[ultra thick] (0,1.728) node [left]{$l^3$} -- (0.833,1.728);
\draw[loosely dashed] (0.833,1.728) -- (0.833,0) node [below left]{$\frac{1}{l}$};
\draw[loosely dashed] (0.833,0) -- (0.833,-3.456);
\coordinate [label=left:$-2l^3$] ($-2l^3$) at (0,-3.456);
\draw (-0.09,-3.456) -- (0.09,-3.456);
\draw[ultra thick] (0.833,-3.456) -- (1.666,-3.456);
\draw[loosely dashed] (1.666,-3.456) -- (1.666,0) node [below left]{$\frac{2}{l}$};
\draw[loosely dashed] (1.666,0) -- (1.666,1.728);
\draw[ultra thick] (1.666,1.728) -- (2.5,1.728);
\draw[loosely dashed] (2.5,1.728) -- (2.5,0);
\end{tikzpicture}
\centering \caption{\parbox[t]{0.6\textwidth}{$y=u_l^2(\xi)$}}
\label{fig:5.3}
\end{subfigure}
\end{center}
\centering \caption{The functions $u_l^n$.}\label{fig:5}
\end{figure}
If $l>\frac{2n+2}{T}$, we have
\begin{align*}
\nl\varphi_n^l(\sigma)\nr\leq\sigma^{2n+1}e^{-T\sigma^2}e^\frac{(n+1)\sigma^2}{l}=\sigma^{2n+1}e^{-\sigma^2\pl T-\frac{n+1}{l}\pr}
\leq\sigma^{2n+1}e^\frac{-\sigma^2T}{2}
\end{align*}
and $\varphi_n^l\to \varphi_n$ as $l\to\infty$ a.e. on $\R$. According to Lebesgue's dominated convergence theorem, we get
\begin{equation*}
\nnl\varphi_n-\varphi_n^l\nnr_0\to 0 \quad\text{ as } l\to\infty, \quad n=\overline{0,\infty}\ .
\end{equation*}

\begin{proof}[Proof of Theorem \ref{thappr}]
Let $W^T\in\THH^0$. Denote $V^T=\FFF W^T$. Then,
\begin{equation*}
W^T=\sum_{n=0}^\infty\omega_n\psi_n^T,\qquad V^T=\sum_{n=0}^\infty\omega_n\widehat{\psi}_n^T.
\end{equation*}
Due to \eqref{par}, for each $\varepsilon>0$ there exists $N\in\N$ such that
\begin{equation}
\label{esinf}
\sqrt{2\pi T}\sum_{n=N+1}^\infty |\omega_n|^22^{2n+1}(2n+1)!<\varepsilon^2.
\end{equation}
We have
\begin{equation*}
\sum_{n=0}^N\omega_n\widehat{\psi}_n^T=i\sum_{n=0}^N\omega_n\sum_{p=0}^n h_p^n\varphi_p
=i\sum_{p=0}^N\varphi_p\sum_{n=p}^N \omega_n h_p^n,
\end{equation*}
where
\begin{equation}
\label{hh}
h_p^n=\frac{(-1)^{p+1}2^{2p+1}(2T)^{p+1}}{(n-p)!(2p+1)!}(2n+1)!.
\end{equation}
For each $p=\overline{0,N}$, determine $l_p^N\in\N$ such that
\begin{equation*}
\nnl\varphi_p-\varphi_p^{l_p^N}\nnr_0<\pl\frac{\pi^3}{Te^2}\pr^\frac{1}{4}
\frac{\varepsilon }{\nnl V^T\nnr_0\sqrt{N+2}\cosh\pl 2\sqrt{2T(N+2)}\pr}
\end{equation*}
and denote
\begin{equation*}
V^T_N=i\sum_{p=0}^N\varphi_p^{l_p^N}\sum_{n=p}^N \omega_n h_p^n.
\end{equation*}
Then,
\begin{equation}
\label{gen}
\nnl V^T-V^T_N\nnr_0\leq\varepsilon\pl 1+\frac{E_N\pl\frac{\pi^3}{Te^2}\pr^{1/4}}{\nnl V^T\nnr_0\sqrt{N+2}\cosh\pl 2\sqrt{2T(N+2)}\pr}\pr,
\end{equation}
where $E_N=\sum_{p=0}^N\sum_{n=p}^N \nl\omega_n h_p^n\nr$.
Let us estimate $E_N$. For $p=\overline{0,N}$, we have
\begin{align}
\label{ee}\sum_{n=p}^N \nl\omega_n h_p^n\nr&\leq\pl\sum_{n=p}^N \nl\omega_n\nr^2\sqrt{2\pi T}2^{2n+1}(2n+1)!\pr^\frac{1}{2}
\pl\sum_{n=p}^N\frac{\nl h_p^n\nr^2}{\sqrt{2\pi T}2^{2n+1}(2n+1)!}\pr^\frac{1}{2}\nonumber\\
&\leq\nnl V^T\nnr_0\pl\sum_{n=p}^N\frac{\nl h_p^n\nr^2}{\sqrt{2\pi T}2^{2n+1}(2n+1)!}\pr^\frac{1}{2}.
\end{align}
Taking into account \eqref{hh}, we get
\begin{equation}
\label{hhh}
\frac{\nl h_p^n\nr^2}{\sqrt{2\pi T}2^{2n+1}(2n+1)!}=\frac{1}{\sqrt{2\pi T}}
\pl\frac{2^{2p+1}(2T)^{p+1}}{(2p+1)!}\pr^2\frac{(2n+1)!}{2^{2n+1}\pl(n-p)!\pr^2}.
\end{equation}
By using \eqref{stirl}, we obtain
\begin{align*}
\frac{(2n+1)!}{2^{2n+1}\pl(n-p)!\pr^2}&\leq\frac{e\sqrt{2n+1}}{2^{2n+2}\pi}\pl\frac{2n+1}{e}\pr^{2n+1}
\frac{1}{n-p}\pl\frac{e}{n-p}\pr^{2(n-p)}\\
&\leq\frac{\sqrt{2n+1}}{2\pi}\pl\frac{2n+1}{2(n-p)}\pr^{2(n-p)+1}\pl\frac{n+1}{e}\pr^{2p}.
\end{align*}
Since $\pl\frac{2n+1}{2(n-p)}\pr^{2(n-p)+1}$ is increasing with respect to $n$, we conclude that
\begin{equation*}
\sup_{n\geq p}\bbl\pl\frac{2n+1}{2(n-p)}\pr^{2(n-p)+1}\bbr=\lim_{n\rightarrow\infty}\pl\frac{2n+1}{2(n-p)}\pr^{2(n-p)+1}
=e^{2p+1}.
\end{equation*}
Therefore,
\begin{align*}
\frac{(2n+1)!}{2^{2n+1}\pl(n-p)!\pr^2}\leq\frac{\sqrt{2n+1}}{2\pi}e^{2p+1}\pl\frac{n+1}{e}\pr^{2p}
\leq\frac{e}{\sqrt{2}\pi}(n+1)^{2p+\frac{1}{2}}.
\end{align*}
According to \eqref{hhh}, we get
\begin{equation*}
\frac{\nl h_p^n\nr^2}{\sqrt{2\pi T}2^{2n+1}(2n+1)!}
\leq\frac{1}{\sqrt{2\pi T}}\pl\frac{2^{2p+1}(2T)^{p+1}}{(2p+1)!}\pr^2\frac{e}{\sqrt{2}\pi}(n+1)^{2p+\frac{1}{2}}.
\end{equation*}
Taking into account \eqref{ee}, we have
\begin{align}
\label{eh}\sum_{n=p}^N \nl\omega_n h_p^n\nr\leq\nnl V^T\nnr_0\pl\frac{1}{4\pi T}\pr^{1/4}\sqrt{\frac{e}{\pi}}
\frac{2^{2p+1}(2T)^{p+1}}{(2p+1)!}\pl\sum_{n=p}^N (n+1)^{2p+\frac{1}{2}}\pr^\frac{1}{2}.
\end{align}
Since
\begin{equation*}
\sum_{n=p}^N(n+1)^{2p+\frac{1}{2}}\leq\int_p^{N+1}(x+1)^{2p+\frac{1}{2}}dx,
\end{equation*}
we obtain
\begin{align*}
\sum_{n=p}^N \nl\omega_n h_p^n\nr&\leq\nnl V^T\nnr_0\pl\frac{1}{4\pi T}\pr^\frac{1}{4}\sqrt{\frac{e}{\pi}}
\frac{2^{2p+1}(2T)^{p+1}}{(2p+1)!}(N+2)^{p+1}\\
&=\nnl V^T\nnr_0\pl\frac{Te^2}{\pi^3}\pr^\frac{1}{4}\frac{\pl 2\sqrt{2T(N+2)}\pr^{2p+1}}{(2p+1)!}\sqrt{N+2}.
\end{align*}
Hence,
\begin{align*}
E_N&\leq\nnl V^T\nnr_0\pl\frac{Te^2}{\pi^3}\pr^\frac{1}{4}\sqrt{N+2}\sum_{p=0}^N\frac{\pl 2\sqrt{2T(N+2)}\pr^{2p+1}}{(2p+1)!}\\
&=\nnl V^T\nnr_0\pl\frac{Te^2}{\pi^3}\pr^\frac{1}{4}\sqrt{N+2}\cosh\pl 2\sqrt{2T(N+2)}\pr.
\end{align*}
Taking into account \eqref{gen}, we conclude that
\begin{equation}
\label{gen1}
\nnl V^T-V^T_N\nnr_0\leq 2\varepsilon.
\end{equation}
Put
\begin{equation*}
u_N=-\sqrt{\frac{\pi}{2}}\sum_{p=0}^N u_{l_p^N}^p\sum_{n=p}^N \omega_n h_p^n.
\end{equation*}
With regard to \eqref{gen1} and \eqref{ns}, we get
\begin{equation*}
\nnl W^T+\Phi_T u_N\nnr^0\leq 2\varepsilon. \qedhere
\end{equation*}
\end{proof}

\begin{remark}
The controls 
\begin{equation}
\label{contr}
u_N=-\sqrt{\frac{\pi}{2}}\sum_{p=0}^N u_{l_p^N}^p\sum_{n=p}^N \omega_n h_p^n,\quad N\in\N,
\end{equation}
found in the proof of Theorem \ref{thappr}
solve the approximate reachability problem for system \eqref{eq1}, \eqref{ic1}.
Here $u_{l_p^N}^p$ is defined by \eqref{contrl}, $h_p^n$ is defined by \eqref{hh} and $\omega_n$, $n=\overline{0,\infty}$, are the coefficients of decomposition of $W^T$ with respect to the basis  $\{\psi_n^T\}_{n=0}^\infty$.
\end{remark}

\begin{corollary}
Each state $W^T\in\THH^0$ is approximately reachable from any state $W^0\in\THH^0$ in a given time $T>0$.
\end{corollary}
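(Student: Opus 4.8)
The plan is to deduce this from Theorem \ref{thappr} together with the linearity (superposition principle) of control system \eqref{eq1}, \eqref{ic1}. The key structural observation is that, by \eqref{sol1}, the value at time $T$ of the solution to \eqref{eq1}, \eqref{ic1} splits as the sum of a control-independent term — the free heat evolution $S_TW^0:=\frac1{\sqrt{2\pi}}\frac{e^{-x^2/(4T)}}{\sqrt{2T}}\ast W^0$ of the initial state — and a term depending only on the control, not on $W^0$. Comparing \eqref{re0} with \eqref{re1}, this yields the affine identity $\RR_T(W^0)=S_TW^0+\RR_T(0)$.

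First I would check that $S_TW^0\in\THH^0$: the convolution kernel is even and integrable, so $S_TW^0$ is odd, and by \eqref{est} with $u=0$ (which gives $|\FFF S_TW^0(\sigma)|=e^{-T\sigma^2}|V^0(\sigma)|$) it lies in $L^2(\R)$. Hence translation by $S_TW^0$ is a homeomorphism of $\THH^0$ onto itself, so that $\overline{\RR_T(W^0)}=S_TW^0+\overline{\RR_T(0)}$, the closures being taken in $\THH^0$.

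Now Theorem \ref{thappr} states precisely that $\overline{\RR_T(0)}=\THH^0$. Therefore $\overline{\RR_T(W^0)}=S_TW^0+\THH^0=\THH^0$, which is the assertion. Equivalently, and more concretely: given $W^T\in\THH^0$ and $\varepsilon>0$, apply Theorem \ref{thappr} to the state $W^T-S_TW^0\in\THH^0$ to obtain $u\in L^\infty(0,T)$ whose associated solution $W$ from the origin satisfies $\nnl W(\cdot,T)-(W^T-S_TW^0)\nnr^0<\varepsilon$; by superposition, the solution from $W^0$ with the same control $u$ is $S_TW^0+W(\cdot,T)$, which is then within $\varepsilon$ of $W^T$ in $\THH^0$.

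There is no real obstacle here — the statement is a formal consequence of the already-established Theorem \ref{thappr} and the affine structure of the reachable sets. The only point requiring a line of verification is that the free evolution $S_T$ maps $\THH^0$ into $\THH^0$, so that the translation argument stays inside the space in which approximate reachability is formulated.
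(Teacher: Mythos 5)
Your argument is correct and is exactly the reasoning the paper intends: the corollary is stated as an immediate consequence of Theorem \ref{thappr}, using the affine structure $\RR_T(W^0)=S_TW^0+\RR_T(0)$ visible from \eqref{sol1}, \eqref{re0} and \eqref{re1}. Your extra verification that $S_TW^0\in\THH^0$ (oddness preserved by the even kernel, $L^2$ bound from \eqref{sol2} with $u=0$) is the only detail the paper leaves implicit, and it is handled correctly.
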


\section{Controllability}\label{c}
\begin{definition}
For control system \eqref{eq1}, \eqref{ic1}, a state $W^0\in\THH^0$ is said to be null-controllable 
in a given time $T>0$ if $0\in\RR_T(W^0)$.
\end{definition}

In other words, a state $W^0\in\THH^0$ is  null-controllable 
in a given time $T>0$ iff there exists $u\in L^\infty(0,T)$ such that there exists a unique solution $W$ to \eqref{eq1}, \eqref{ic1}  and $W(\cdot,T)=0$.

\begin{theorem}
\label{thc}
If a state $W^0\in\THH^0$ is  null-controllable 
in a  time $T>0$, then $W^0=0$.
\end{theorem}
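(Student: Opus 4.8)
The statement asserts that the only initial state that can be driven exactly to the origin in time $T$ is the zero state. The plan is to exploit the explicit representation \eqref{sol2} of the solution in Fourier variables together with the analyticity properties already developed in Section~\ref{r}, and then invoke Theorem~3.1 of \cite{SMEZ} (cited in the introduction) to obtain a contradiction unless $W^0=0$. Suppose $W^0\in\THH^0$ is null-controllable in time $T$ via a control $u\in L^\infty(0,T)$, so that the solution satisfies $W(\cdot,T)=0$. Passing to the Fourier transform and using \eqref{sol2} with the condition $V(\sigma,T)=0$ gives
\begin{equation*}
e^{-T\sigma^2}V^0(\sigma)=\sqrt{\frac2\pi}i\sigma\int_0^T e^{-(T-\xi)\sigma^2}u(\xi)\,d\xi,\qquad\sigma\in\R.
\end{equation*}

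First I would observe that the right-hand side, as a function of $\sigma$, extends to an entire function of order $2$: indeed, substituting $\xi\mapsto T-\xi$ it equals $-\sqrt{2/\pi}\,i\sigma\int_0^T e^{-\eta\sigma^2}u(T-\eta)\,d\eta$, which is the same type of expression analyzed in the proof of Theorem~\ref{thmom} and Theorem~\ref{thmomap}, and its Taylor coefficients at $0$ are controlled by the power moments $\int_0^T\eta^n u(T-\eta)\,d\eta$. Consequently $V^0(\sigma)=e^{T\sigma^2}\cdot(\text{entire of order }2)$, and from the bound \eqref{est}-type estimate on the moment integral one reads off that the Taylor coefficients $(V^0)^{(2n+1)}(0)$ grow no faster than $C^n (2n+1)!/n!$ after multiplying by the $e^{T\sigma^2}$ factor; equivalently, the odd moments $\int_0^\infty x^{2n+1}W^0(x)\,dx$ satisfy a growth bound forcing the associated Fourier coefficients $\{\alpha_m\}$ of $W^0$ (in the Hermite-type basis used in \cite{SMEZ}) to decay like $O(e^{m\delta})$ for every $\delta>0$. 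This is precisely the regime in which Theorem~3.1 of \cite{SMEZ} asserts that the state cannot be steered to the origin by $L^2$-controls — and since $L^\infty(0,T)\subset L^2(0,T)$, a fortiori it cannot be steered to the origin by an $L^\infty$-control either.

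The logical structure is therefore: null-controllability of $W^0$ by an $L^\infty$-control implies (via the entirety of the moment expression) a super-fast decay of the Fourier coefficients of $W^0$; by the result of \cite{SMEZ} this decay is incompatible with null-controllability by $L^2$-controls unless all the coefficients vanish; hence $W^0=0$. The delicate point I would need to handle carefully is translating the growth estimate on $\pl V^0\pr^{(2n+1)}(0)$ — obtained from dividing the order-$2$ entire function by $e^{-T\sigma^2}$, i.e.\ multiplying by $e^{T\sigma^2}$ — into the precise statement about the Hermite–Fourier coefficients $\alpha_m$ that matches the hypothesis of Theorem~3.1 of \cite{SMEZ}; the change of basis between the monomial/Taylor data at $\sigma=0$ and the $\{\phi_m\}$ expansion of \cite{SMEZ} is a routine but nontrivial bookkeeping step, analogous to the passage through the $h_p^n$ coefficients in the proof of Theorem~\ref{thappr}. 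Once that identification is in place, the contradiction is immediate and the theorem follows.
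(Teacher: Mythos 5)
Your overall strategy---reduce exact null-controllability to the exponential moment problem dichotomy of \cite{SMEZ} and conclude that the relevant coefficients vanish---is the same as the paper's, but the decisive step is missing. You defer as ``routine but nontrivial bookkeeping'' precisely the point where all the work lies: to invoke \cite[Theorem 3.1, b)]{SMEZ} one must actually exhibit a moment problem of the form $\int_0^{\overline T}U^*(s)e^{ms}\,ds=\nu_m^*$ with $U^*\in L^2$ and $\nu_m^*=O(e^{m\delta})$ for every $\delta>0$ (equivalently, verify the coefficient condition in their basis), and nothing in your sketch establishes this. The conversion factors between the Taylor/moment data of $V^0$ at $\sigma=0$ and the Hermite-type coefficients are themselves exponentially large or small in $m$ (factors like $2^m$, $m!/\sqrt{(2m+1)!}$, $\phi_m'(0)$), so whether the combination is subexponential is exactly the issue, not an afterthought; your claimed bound ``$(V^0)^{(2n+1)}(0)\le C^n(2n+1)!/n!$ hence $\alpha_m=O(e^{m\delta})$'' is asserted, not derived. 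There are further unverified prerequisites in your route: \cite{SMEZ} works in a weighted Sobolev space of negative order via similarity variables, so applying their \emph{controllability} statement directly requires checking that $\THH^0=L^2$ data and $L^\infty$ controls fit their solution framework; and the raw moments $\int_0^\infty x^{2n+1}W^0(x)\,dx$ you invoke need not even exist as absolutely convergent integrals for a general $W^0\in\THH^0$ (one must work with the entire continuation of $V^0$ furnished by the identity \eqref{ccc}).

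For comparison, the paper sidesteps all of this by building its own reduction rather than transplanting the problem into the similarity-variable framework of \cite{SMEZ}: it fixes an auxiliary $T^*>T$, expands the identity $V^0(\sigma)=\sqrt{2/\pi}\,i\sigma\int_0^T e^{\xi\sigma^2}u(\xi)\,d\xi$ against the orthogonal basis $\{\widehat\psi_m^{T^*}\}$ (the factor $e^{-T^*\sigma^2}$ with $T^*>T$ makes the functionals $\mu_m$ well defined), computes $\mu_m(\xi)$ in closed form as a constant times $(T^*-\xi)^{-3/2}\pl\frac{T^*+\xi}{T^*-\xi}\pr^m$, and substitutes $e^s=\frac{T^*+\xi}{T^*-\xi}$ to land exactly on the exponential moment problem \eqref{ccc4} with $U^*\in L^2(0,\overline T)$ because $u\in L^\infty$ and the Jacobian weight is bounded. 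Crucially, the subexponential bound \eqref{ccc5} on $\nu_m^*$ there comes only from Cauchy--Schwarz ($|\nu_m|\le\nnl V^0\nnr_0\,\|\widehat\psi_m^{T^*}\|_0$) and Stirling, giving $|\nu_m^*|\lesssim(2m+1)^{-1/4}$; it does not come from the controllability relation at all, contrary to where your sketch tries to get it. Unless you carry out the explicit identification of the moment problem (or an equivalent computation of the \cite{SMEZ} coefficients with all normalizations), your argument does not yet reach the hypotheses of the cited theorem, so as written it has a genuine gap.
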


\begin{proof}
Find $u\in L^\infty(0,T)$ such that there exists a unique solution to \eqref{eq1}, \eqref{ic1}  and $W(\cdot,T)=0$. Denote $V^0=\FFF W^0$, $V(\cdot,t)=\FFF_{x\to\sigma}W(\cdot, t)$, $t\in[0,T]$.Taking into account \eqref{sol2}, we obtain
\begin{equation}
\label{ccc}
V^0(\sigma)=\sqrt{\frac2\pi}i\sigma\int_0^T e^{\xi\sigma^2}u(\xi)\, d\xi,\quad \sigma\in\R.
\end{equation}
Let $T^*>T$ be fixed. Then
\begin{equation*}
\sum_{m=0}^\infty\nu_m\frac{\widehat\psi_m^{T^*}}{\big(\big\|\widehat\psi_m^{T^*}\big\|\big)^2}
=\sum_{m=0}^\infty\int_0^T\mu_m(\xi)u(\xi)\, d\xi\frac{\widehat\psi_m^{T^*}}{\big(\big\|\widehat\psi_m^{T^*}\big\|\big)^2},
\end{equation*}
where
\begin{align}
\label{ccc1}
\nu_m&=2\int_0^\infty V^0(\sigma)\widehat\psi_m^{T^*}(\sigma)\,d\sigma,\\
\label{ccc2}
\mu_m(\xi)&=2i\sqrt{\frac2\pi}\int_0^\infty \sigma e^{\xi\sigma^2}\widehat\psi_m^{T^*}(\sigma)\, d\sigma.
\end{align}
Therefore,
\begin{equation}
\label{ccc3}
\int_0^T \mu_m(\xi)u(\xi)\,d\xi=\nu_m,\quad m=\overline{0,\infty}. 
\end{equation}
Let $m=\overline{0,\infty}$ be fixed. We have (see \eqref{hh})
\begin{align}
\label{ccc3a}
\mu_m(\xi)&=-2\sqrt{\frac2\pi}\sum_{p=0}^m h_p^m
\int_0^\infty \sigma^{2p+2} e^{-(T^*-\xi)\sigma^2}\,d\sigma\nonumber\\
&=(2m+1)!\frac{2\sqrt 2 T^*}{(T^*-\xi)^{3/2}}\sum_{p=0}^m\frac{(-1)^p}{(m-p)! p!}
\pl\frac{2T^*}{T^*-\xi}\pr^p\nonumber\\
&=(-1)^m \frac{(2m+1)!}{m!} \frac{2\sqrt2 T^*}{(T^*-\xi)^{3/2}}
\pl\frac{T^*+\xi}{T^*-\xi}\pr^m.
\end{align}
Replacing $\frac{T^*+\xi}{T^*-\xi}$ by $e^s$, we get
\begin{equation*}
\int_0^T \frac{T^*}{(T^*-\xi)^{3/2}}\pl\frac{T^*+\xi}{T^*-\xi}\pr^mu(\xi)\,d\xi
=\sqrt{\frac{T^*}{2}}\int_0^{\overline{T}} e^{ms} 
u\pl\frac{T^*(e^s-1)}{e^s+1}\pr\frac{e^s}{\sqrt{e^s+1}} \, ds,
\end{equation*}
where $\overline{T}=\ln\pl\frac{T^*+T}{T^*-T}\pr$. Denoting $U^*(s)=u\pl\frac{T^*(e^s-1)}{ e^s+1}\pr\frac{e^s}{\sqrt{e^s+1}}$, $s\in(0,\overline{T})$, 
$\nu_m^*=\frac{(-1)^m m!}{2\sqrt{T^*}(2m+1)!}\nu_m$, $m=\overline{0,\infty}$ and taking into account \eqref{ccc3}, \eqref{ccc3a}, we obtain
\begin{equation}
\label{ccc4}
\int_0^{\overline{T}} U^*(s) e^{ms}=\nu_m^*,\quad m=\overline{0,\infty}.
\end{equation}
Since
\begin{equation*}
|\nu_m|\le\nnl V^0\nnr_0\big\|\widehat \psi_m^{T^*}\big\|_0,\quad m=\overline{0,\infty},
\end{equation*}
taking into account \eqref{ortt} and the Stirling formula, we obtain
\begin{align*}
\nl \nu_m^*\nr\le\nnl V^0\nnr_0 \pl\frac{\pi}{T^*}\pr^{1/4}
\frac{2^{m-1/4}m!}{\sqrt{(2m+1)!}}
\sim \pl\frac{\pi^2}{2^3T^*}\pr^{1/4}
\frac{\nnl V^0\nnr_0}{(2m+1)^{1/4}}\quad \text{as } m\to\infty.
\end{align*}
Therefore, for all $\delta>0$ there exists $C_\delta>0$ such that
\begin{equation}
\label{ccc5}
\nl \nu_m^*\nr\le C_\delta e^{m\delta},\quad m=\overline{0,\infty}.
\end{equation}
We have
\begin{align}
\label{ccc6}
\int_0^{\overline{T}}\nl U^*(s)\nr^2\,ds
&=\int_0^T \nl u(\xi)\nr^2\frac{T^*+\xi}{(T^*-\xi)^2}\,d\xi
\le\pl\nnl u\nnr_{L^\infty(0,T)}\pr^2\int_0^T\frac{T^*+\xi}{(T^*-\xi)^2}\,d\xi\nonumber\\
&=\pl\nnl u\nnr_{L^\infty(0,T)}\pr^2 \pl \frac{2T}{T^*-T}-\ln\pl 1+\frac T{T^*-T}\pr\pr.
\end{align}
Thus,  $U^*\in L^2(0,T_*)$ and \eqref{ccc4}, \eqref{ccc5} hold. Due to \cite[Theorem~3.1, b)]{SMEZ}, we obtain $\nu_m^*=0$, $m=\overline{0,\infty}$, i.e., $V^0=W^0=0$.
\end{proof}

\section{Approximate controllability}\label{ac}

\begin{definition}
For control system  \eqref{eq1}, \eqref{ic1}, a state $W^0\in\THH^0$ is said to be approximately controllable to a target state $W^T\in\THH^0$
in a given time $T>0$ if $W^T\in\overline{\RR_T(W^0)}$, where the closure is considered in the space $\THH^0$. In particular, if $W^T=0$, the state $W^0$ is called approximately controllable.
\end{definition}

In other words, a state $W^0\in\THH^0$ is  approximately controllable  to a target state $W^T\in\THH^0$
in a given time $T>0$ iff for each $\varepsilon>0$ there exists $u_\varepsilon\in L^\infty(0,T)$ such that there exists a unique solution $W$ to \eqref{eq1}, \eqref{ic1} with $u=u_\varepsilon$ and $\nnl W(\cdot,T)-W^T \nnr^0<\varepsilon$.

Taking into account Theorem \ref{thappr}, one can see that the following  theorem holds.

\begin{theorem}
\label{thappc}
Each state $W^0\in\THH^0$ is  approximately controllable  to any target state $W^T\in\THH^0$
in a given time $T>0$.
\end{theorem}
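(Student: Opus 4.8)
The plan is to obtain Theorem~\ref{thappc} as an immediate consequence of Theorem~\ref{thappr} together with the affine structure of the reachable set recorded in \eqref{re0}--\eqref{re1}. The key observation is that, for any $W^0\in\THH^0$, comparing those two formulas yields the splitting
\[
\RR_T(W^0)=G_TW^0+\RR_T(0),
\]
where $G_TW^0:=\frac{1}{\sqrt{2\pi}}\frac{e^{-x^2/(4T)}}{\sqrt{2T}}\ast W^0$ is a fixed element of $\THH^0$ that does not depend on the control, while the control-dependent term $\sqrt{2/\pi}\,x\int_0^T e^{-x^2/(4\xi)}v(\xi)(2\xi)^{-3/2}\,d\xi$ is literally the same in \eqref{re0} and in \eqref{re1} and hence ranges over $\RR_T(0)$ as $v$ runs over $L^\infty(0,T)$.

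First I would check that $G_TW^0\in\THH^0$: $G_T$ is a rescaled heat semigroup, i.e.\ convolution with an even Gaussian of unit mass, which by Young's inequality maps $L^2(\R)$ into $L^2(\R)$ and preserves oddness, so $G_TW^0\in\THH^0$ whenever $W^0\in\THH^0$. Then, since translation by the fixed vector $G_TW^0$ is a homeomorphism of $\THH^0$, the splitting above passes to closures:
\[
\overline{\RR_T(W^0)}=G_TW^0+\overline{\RR_T(0)}.
\]
By Theorem~\ref{thappr} we have $\overline{\RR_T(0)}=\THH^0$, hence $\overline{\RR_T(W^0)}=G_TW^0+\THH^0=\THH^0$. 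In particular any target $W^T\in\THH^0$ lies in $\overline{\RR_T(W^0)}$, which is precisely the assertion that $W^0$ is approximately controllable to $W^T$ in time $T$.

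Equivalently, and perhaps more transparently, one may argue by linearity of \eqref{eq1}, \eqref{ic1} in the pair $(W^0,u)$: given $\varepsilon>0$, Theorem~\ref{thappr} furnishes $u_\varepsilon\in L^\infty(0,T)$ steering the origin to within $\varepsilon$ (in $\THH^0$) of the state $W^T-G_TW^0$, and applying the same control with initial data $W^0$ then produces, by superposition, a solution $W$ with $\nnl W(\cdot,T)-W^T\nnr^0<\varepsilon$. I do not expect any genuine obstacle here: the entire content sits in Theorem~\ref{thappr}, and the only points deserving a remark are the affine decomposition of $\RR_T(W^0)$, read off from \eqref{re0}--\eqref{re1}, and the invariance of the odd-function space $\THH^0$ under the heat flow.
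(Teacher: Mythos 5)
Your proposal is correct and takes essentially the same route as the paper, which simply observes that Theorem \ref{thappc} follows from Theorem \ref{thappr}: the intended content is exactly the affine decomposition $\RR_T(W^0)=G_TW^0+\RR_T(0)$ read off from \eqref{re0}--\eqref{re1} (equivalently, your superposition argument) combined with the density of $\RR_T(0)$ in $\THH^0$. You merely spell out the translation-by-the-free-evolution details that the paper leaves implicit.
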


\section{Examples}\label{ex}

The following two examples illustrates the results of Theorem \ref{thmomap}.
\begin{example}%
\label{ex1}
Let $T=1$, $W^T(x)=\sqrt{\frac{2}{\pi}}x
\int_0^T e^{-\frac{x^2}{4\xi}}\frac{d\xi}{2(2\xi)^{1/2}}$. Let us find controls $u_N(\xi)=v_N(T-\xi)$, $\xi\in[0,T]$, where $v_N$ is the solution to \eqref{mpf} for $N=2P-1$, $P\in\N$. We use the algorithm given in \cite{LGOR} to find $v_N$ in the form
\begin{equation}
\label{exx}
v_N(\xi)=\begin{cases}
1 & \text{if } \xi\in[\nu_{2p-1},\nu_{2p}],\ p=\overline{1,P},\\
0 & \text{if } \xi\in[\nu_{2p},\nu_{2p+1}],\ p=\overline{0,P},
\end{cases}
\end{equation}
where $0=\nu_0\le\nu_1\le\nu_2\le\nu_3\le\dots\le\nu_{2P-1}\le\nu_{2P}\le\nu_{2P+1}=T$. By $W_N$ we denote the value at $t=T$ of the solution to \eqref{eq1}, \eqref{ic1} with the control $u=u_N$. Influence of controls $u_N$, $N=3,5,7,15$, on the end states of solutions $W_N$ is given in Figure \ref{fig:mom2}.
\begin{figure}[!h]
\begin{center}
\begin{subfigure}[b]{0.48\linewidth}
\centering \includegraphics[width=\textwidth]{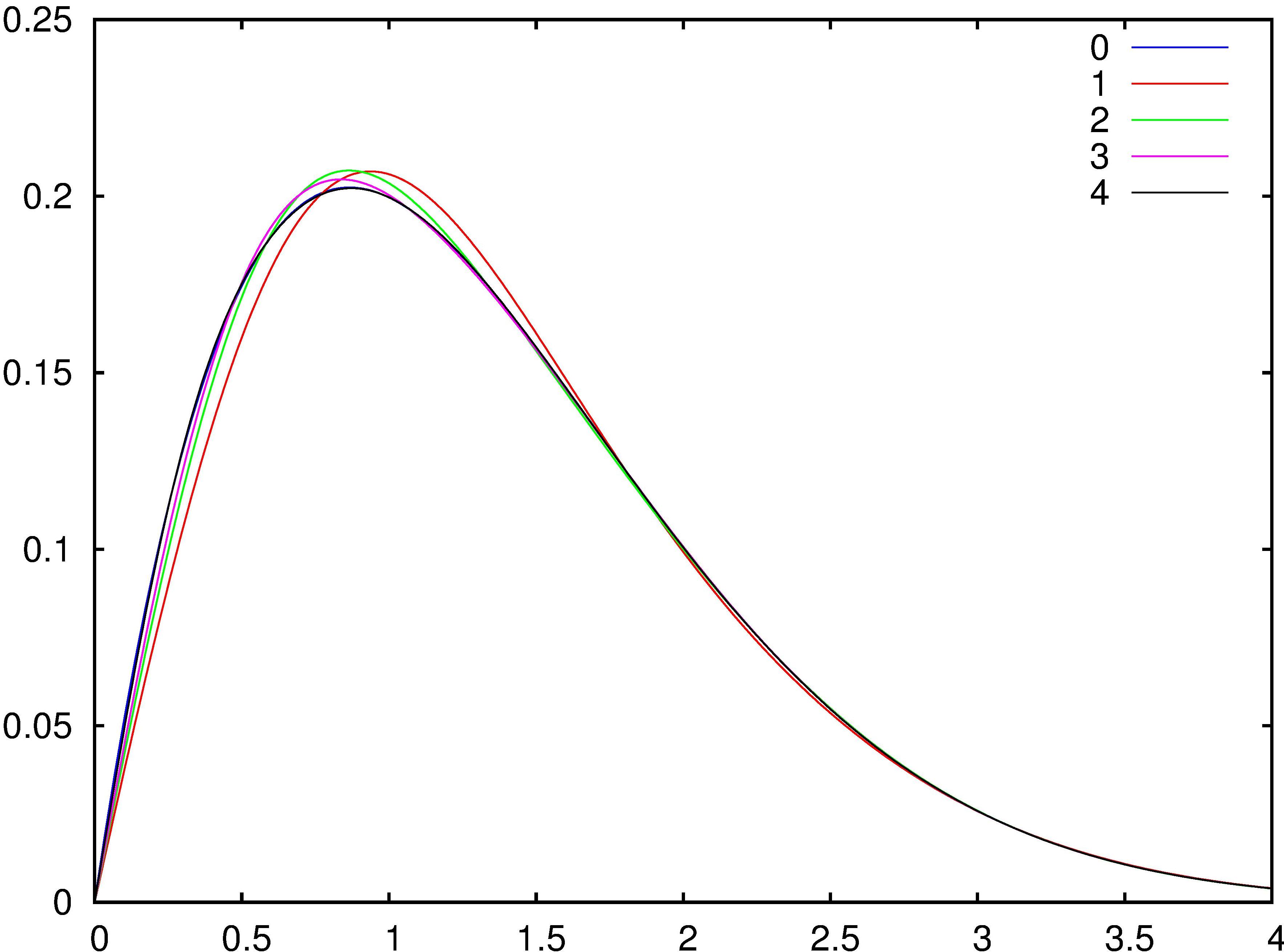}\\
\centering \caption{\parbox[t]{0.92\textwidth}{%
Influence of the control $u_N$ on the end state $W_N$ in the cases: 
\zcx{0}~$u=0$, \zcx{1}~$N=3$, \zcx{2}~$N=5$, \zcx{3}~$N=7$, \zcx{4}~$N=15$.}}
\label{fig:mom22}
\end{subfigure}
\begin{subfigure}[b]{0.49\linewidth}
\centering \includegraphics[width=\textwidth]{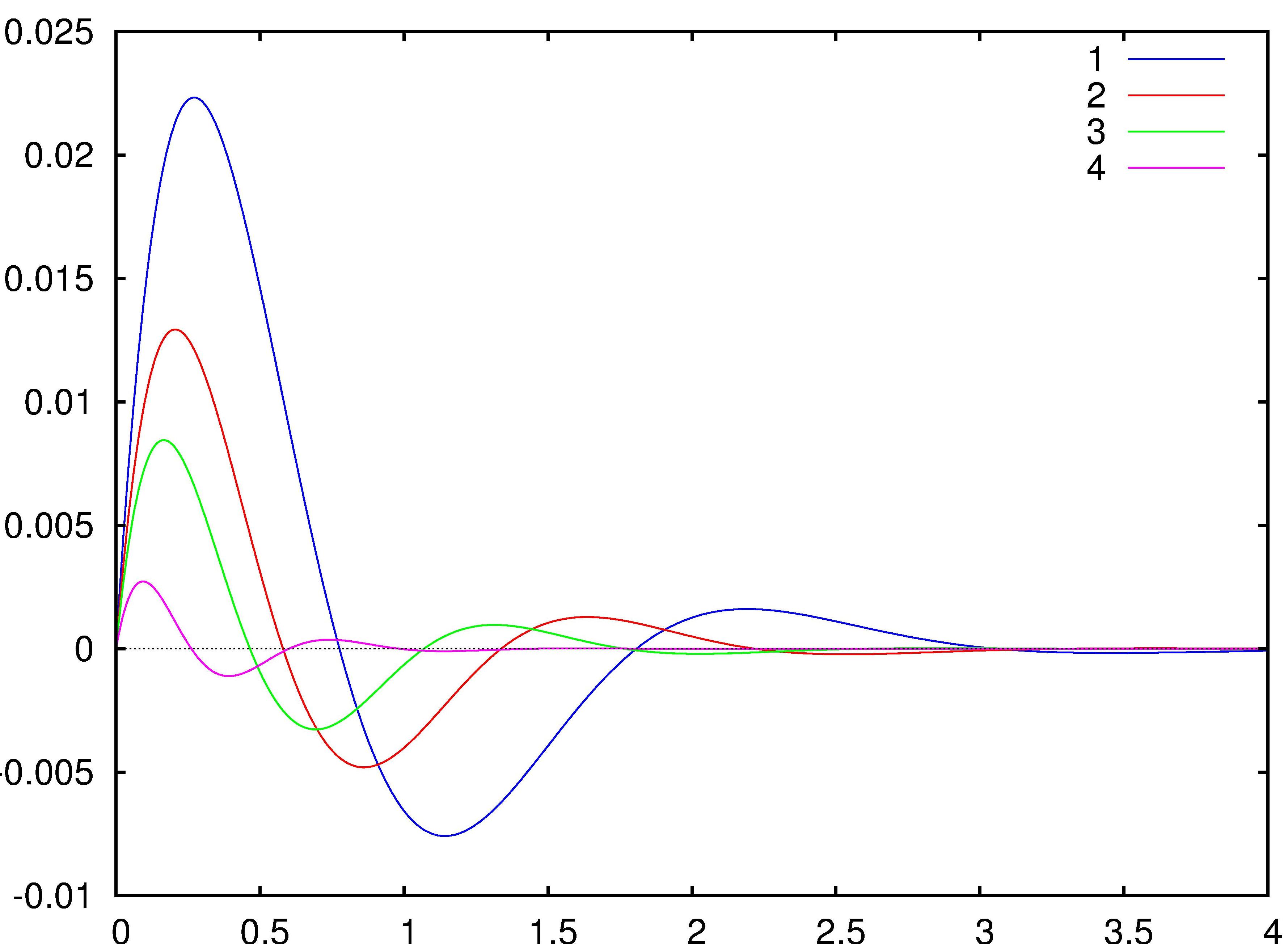}
\centering \caption{\parbox[t]{0.92\textwidth}{%
The differences $W^T-W_N$ in the cases:
\zcx{1}~$N=3$, \zcx{2}~$N=5$, \zcx{3}~$N=7$, \zcx{4}~$N=15$.\\ \mbox{}}}
\label{fig:mom21}
\end{subfigure}
      \caption{Influence of the control $u_N$ on the end state of solution to \eqref{eq1}, \eqref{ic1} with the control $u=u_N$ and the target state  $W^T(x)=\sqrt{\frac{2}{\pi}}x
\int_0^T e^{-\frac{x^2}{4\xi}}\frac{d\xi}{2(2\xi)^{1/2}}$.}\label{fig:mom2}
\end{center}
\end{figure}
\end{example}

\begin{example}%
\label{ex2}
Let $T=1$, $W^T(x)=\sqrt{\frac{2}{\pi}}x
\int_0^T e^{-\frac{x^2}{4\xi}}\frac{1-\xi}{(2\xi)^{3/2}}d\xi$. Let us find controls $u_N(\xi)=v_N(T-\xi)$, $\xi\in[0,T]$, where $v_N$ is the solution to \eqref{mpf} for $N=2P-1$, $P\in\N$. We use the algorithm given in \cite{LGOR} to find $v_N$ in the form \eqref{exx}. By $W_N$ we denote the value at $t=T$ of the solution to \eqref{eq1}, \eqref{ic1} with the control $u=u_N$. Influence of controls $u_N$, $N=3,5,7,15$, on the end states of solutions $W_N$ is given in Figure \ref{fig:mom3}.
\begin{figure}[!h]
\begin{center}
\begin{subfigure}[b]{0.48\linewidth}
\centering \includegraphics[width=\textwidth]{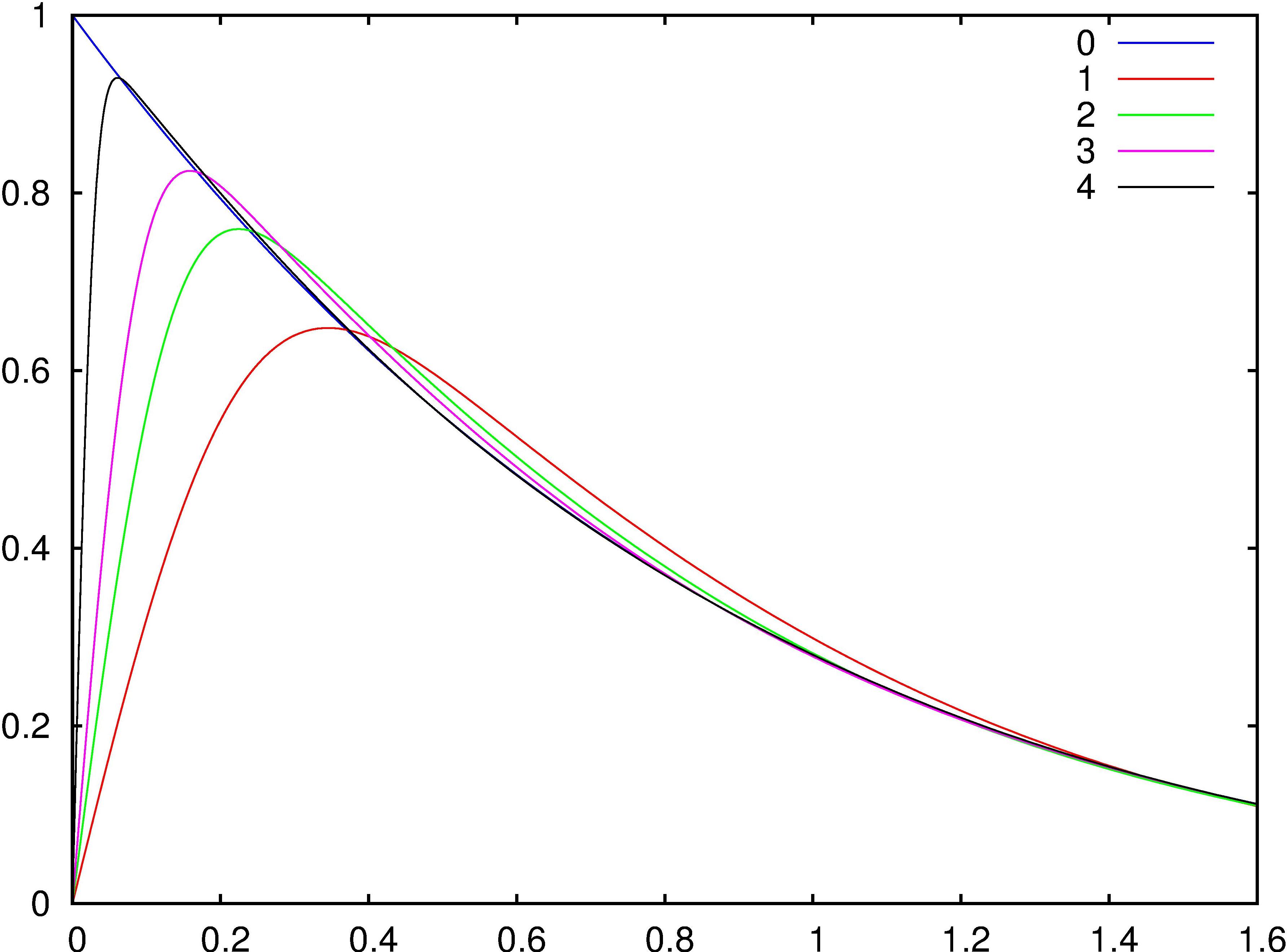}\\
\centering \caption{\parbox[t]{0.92\textwidth}{Influence of the control $u_N$ on the end state $W_N$ in the cases: 
\zcx{0}~$u=0$, \zcx{1}~$N=3$, \zcx{2}~$N=5$, \zcx{3}~$N=7$, \zcx{4}~$N=15$.}}
\label{fig:mom32}
\end{subfigure}
\begin{subfigure}[b]{0.49\linewidth}
\centering \includegraphics[width=0.98\textwidth]{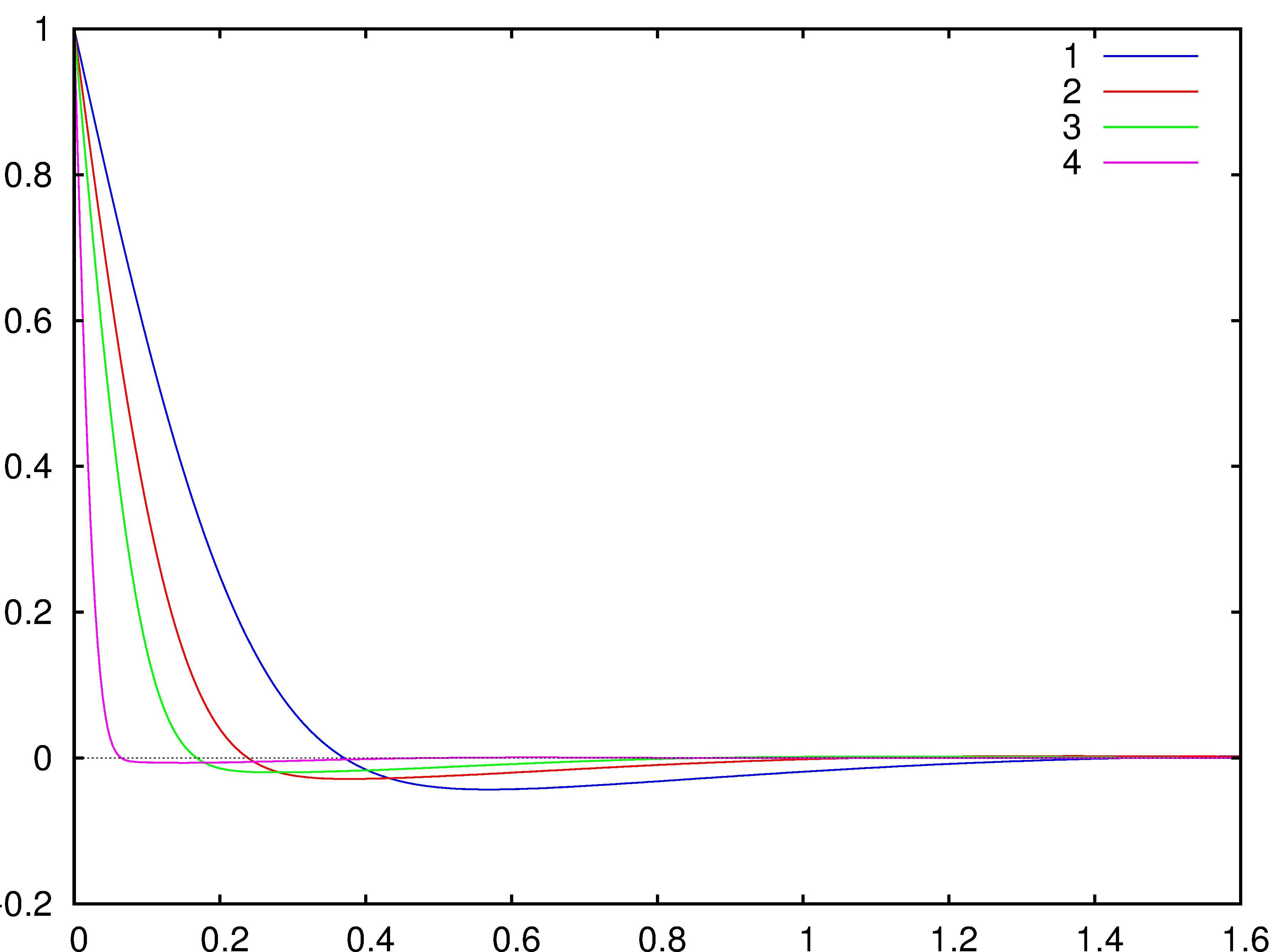}
\centering \caption{\parbox[t]{0.92\textwidth}{The differences $W^T-W_N$ in the cases:
\zcx{1}~$N=3$, \zcx{2}~$N=5$, \zcx{3}~$N=7$, \zcx{4}~$N=15$.\\ \mbox{}}}
\label{fig:mom31}
\end{subfigure}
      \caption{Influence of the control $u_N$ on the end state of solution to \eqref{eq1}, \eqref{ic1} with the control $u=u_N$ and the target state $W^T(x)=\sqrt{\frac{2}{\pi}}x
\int_0^T e^{-\frac{x^2}{4\xi}}\frac{1-\xi}{(2\xi)^{3/2}}d\xi$.}\label{fig:mom3}
\end{center}
\end{figure}
\end{example}

The following example illustrates the result of Theorem \ref{thappr}.

\begin{example}
\label{ex3}
Let $W^T(x)=2\sqrt{\frac{2}{\pi}}e^{\frac14}e^{-\frac{x^2}{4T}}\sin\frac{x}{\sqrt{2T}}$.
Consider the reachability problem for system \eqref{eq1}, \eqref{ic1} with $W^0=0$.
Denote $V^T=\FFF W^T$. Then $V^T(\sigma)=-4i\sqrt{\frac{T}{\pi}}e^{-\frac14}e^{-T\sigma^2}\sinh\sqrt{2T}\sigma$.
Since $V^T=\sum_{n=0}^\infty\omega_n\widehat{\psi}_n^T$, then it is easy to see that
%\begin{equation*}
$V^T(\sigma)=ie^{-T\sigma^2}\sum_{p=0}^\infty\sigma^{2p+1}\sum_{n=p}^\infty\omega_n h_p^n,$
%\end{equation*}
where $h_p^n$ is defined by \eqref{hh} and $\omega_n=\sqrt{\frac{2}{\pi}}\frac{(-1)^n}{2^{2n}(2n+1)!}$.

For each $N\in\N$, denote $g_p^N=\sum_{n=p}^N \omega_n h_p^n$. Denote also
\begin{align*}
V_N(\sigma)&=i\sum_{p=0}^N g_p^N\varphi_p(\sigma)=ie^{-T\sigma^2}\sum_{p=0}^N g_p^N\sigma^{2p+1},\\
V_N^l(\sigma)&=i\sum_{p=0}^N g_p^N\varphi_p^l(\sigma)=ie^{-T\sigma^2}\sum_{p=0}^N g_p^N\sigma^{2p+1}\pl\frac{e^{\sigma^2/l}-1}{\sigma^2/l}\pr^{p+1}.
\end{align*}
Then,
\begin{equation}
\label{estim}
\nnl V^T-V_N^l\nnr_0\leq\nnl V^T-V_N\nnr_0+\nnl V_N^l-V_N\nnr_0.
\end{equation}
Using \eqref{ortt}, we get
\begin{align}
\nnl V^T-V_N\nnr_0&=\sqrt{\frac{2}{\pi}}\pl\sum_{n=N+1}^\infty\pl\frac{(-1)^n}{2^{2n}(2n+1)!}\pr^2\sqrt{2\pi T}2^{2n+1}(2n+1)!\pr^\frac{1}{2}\nonumber\\
&\leq\sqrt{8}\pl\frac{2T}{\pi}\pr^\frac{1}{4}\sqrt{\frac{\cosh\frac{1}{2}}{2^{2N+3}(2N+3)!}}.\label{estim1}
\end{align}
We have
\begin{align}
\label{estim2}
\nnl V_N^l-V_N\nnr_0\leq\sum_{p=0}^N \nl g_p^N\nr\nnl\varphi_p^l-\varphi_p\nnr_0.
\end{align}
Substituting $h_p^n$ and $\omega_n$ in $g_p^N$, we obtain
\begin{align}
\nl g_p^N\nr&=\sqrt{\frac{2}{\pi}}\nl\sum_{n=p}^N\frac{(-1)^{n+p+1}2^{2p+1}(2T)^{p+1}(2n+1)!}{2^{2n}(2n+1)!(n-p)!(2p+1)!}\nr\nonumber\\
&=2\sqrt{\frac{2}{\pi}}\frac{(2T)^{p+1}}{(2p+1)!}\nl\sum_{n=p}^N\frac{(-1)^{n-p}}{2^{2(n-p)}(n-p)!}\nr\leq 2\sqrt{\frac{2}{\pi}}\frac{(2T)^{p+1}}{(2p+1)!}e^{-\frac{1}{4}}.\label{estim3}
\end{align}
Evidently, the following three estimates hold:
\begin{align*}
\nl (y+1)^{p+1}-1\nr\leq(p+1)(y+1)^p y,\qquad y>0,\\
\frac{e^z-1}{z}\leq e^z,\quad \frac{e^z-1}{z}-1\leq\frac{1}{2}ze^z,\qquad z>0.
\end{align*}
Therefore,
\begin{align*}
\nl\pl\frac{e^{\sigma^2/l}-1}{\sigma^2/l}\pr^{p+1}-1\nr&\leq(p+1)\pl\frac{e^{\sigma^2/l}-1}{\sigma^2/l}\pr^p\pl\frac{e^{\sigma^2/l}-1}{\sigma^2/l}-1\pr\\
&\leq\frac{p+1}{2l}\sigma^2e^{(p+1)\sigma^2/l}.
\end{align*}
From here, it follows that 
\begin{align}
\nnl\varphi_p^l-\varphi_p\nnr_0&=\pl 2\int_0^\infty\pl\sigma^{2p+1}e^{-T\sigma^2}\nl\pl\frac{e^{\sigma^2/l}-1}{\sigma^2/l}\pr^{p+1}-1\nr\pr^2 d\sigma\pr^\frac{1}{2}\nonumber\\
&\leq\pl\frac{(p+1)^2}{2l^2}\int_0^\infty\pl\sigma^{2p+3}e^{-\sigma^2(T-(p+1)/l)}\pr^2 d\sigma\pr^\frac{1}{2}\nonumber\\
&\leq\pl\frac{(p+1)^2}{2l^2}\int_0^\infty\pl\sigma^{2p+3}e^{-\frac{3}{4}T\sigma^2}\pr^2 d\sigma\pr^\frac{1}{2},\label{estim4}
\end{align}
if $\frac{p+1}{l}<\frac{T}{4}$. Since $\max_{\sigma>0}\sigma^{2p+3}e^{-T\sigma^2/2}=\pl\frac{2p+3}{T}\pr^{p+3/2}e^{-(2p+3)/2}$, then we get
\begin{align*}
\nnl\varphi_p^l-\varphi_p\nnr_0&\leq\pl\frac{(p+1)^2}{2l^2}\pl\frac{2p+3}{T}\pr^{2p+3}e^{-(2p+3)}\int_0^\infty e^{-\frac{T\sigma^2}{2}}d\sigma\pr^\frac{1}{2}\\
&\leq\pl\frac{2\pi}{T}\pr^\frac{1}{4}\frac{p+1}{l}\frac{2^{p+1/2}}{T^{p+3/2}}\pl\frac{p+2}{e}\pr^{p+2}.
\end{align*}
From  here, using the Stirling formula \eqref{stirl}, we obtain 
\begin{align}
\nnl\varphi_p^l-\varphi_p\nnr_0\leq\pl\frac{1}{2\pi T}\pr^\frac{1}{4}\frac{\sqrt{p+2}}{l}\frac{2^{p+1/2}}{T^{p+3/2}}(p+2)!.\label{estim5}
\end{align}
According to \eqref{estim3} and \eqref{estim5} and continuing \eqref{estim2}, we have
\begin{align}
\label{estim6}
\nnl V_N^l-V_N\nnr_0&\leq\sum_{p=0}^N
2\sqrt{\frac{2}{\pi}}\frac{(2T)^{p+1}}{(2p+1)!}e^{-\frac{1}{4}}\pl\frac{1}{2\pi
T}\pr^\frac{1}{4}\frac{\sqrt{p+2}}{l}\frac{2^{p+1/2}}{T^{p+3/2}}(p+2)!\nonumber\\
&=\frac{2^\frac{11}{4}}{l}\pl\frac{1}{T^3\pi^3e}\pr^\frac{1}{4}\sum_{p=0}^N\frac{2^{2p}\sqrt{p+2}(p+2)!}{(2p+1)!}.
\end{align}
From \eqref{estim}, taking into account \eqref{estim1} and \eqref{estim6}, we get 
\begin{align}
\nnl V^T-V_N^l\nnr_0&\leq \sqrt{8}\pl\frac{2T}{\pi}\pr^\frac{1}{4}\sqrt{\frac{\cosh\frac{1}{2}}{2^{2N+3}(2N+3)!}}\nonumber\\
&+\frac{2^\frac{11}{4}}{l}\pl\frac{1}{T^3\pi^3e}\pr^\frac{1}{4}\sum_{p=0}^N\frac{2^{2p}\sqrt{p+2}(p+2)!}{(2p+1)!}.\label{estim7}
\end{align}
For the last sum, we have
\begin{align*}
\sum_{p=0}^N\frac{2^{2p}\sqrt{p+2}(p+2)!}{(2p+1)!}\leq\sum_{p=0}^N\frac{(p+1)(p+2)^{3/2}}{p!}\leq 26+8e.
\end{align*}
Therefore, \eqref{estim7} takes the form
\begin{align*}
\nnl V^T-V_N^l\nnr_0\leq \sqrt{8}\pl\frac{2T}{\pi}\pr^\frac{1}{4}\sqrt{\frac{\cosh\frac{1}{2}}{2^{2N+3}(2N+3)!}}
+2^\frac{11}{4}\pl\frac{1}{T^3\pi^3e}\pr^\frac{1}{4}\frac{1}{l}(26+8e).
\end{align*}

Due to Theorem \eqref{thappr}, we obtain $W_N^l=-\Phi_Tu_N$. With regard to \eqref{contr}, we get
\begin{equation*}
W_N^l(x)=-x\int_0^T\frac{u_N^l(\xi)}{\pl 2(T-\xi)\pr^{3/2}}e^{-\frac{x^2}{4(T-\xi)}}d\xi,
\end{equation*}
where $u_N^l=\sum_{p=0}^Ng_p^N u_l^p$. Some estimates for $\nnl W^T-W_N^l\nnr^0$ are given in the Table \ref{tab1} and influence of the control $u_N^l$ on the end state $W_N^l$ of solution to \eqref{eq1}, \eqref{ic1} with the control $u=u_N^l$ and the target state $W^T$ is shown in Figure \ref{fig:appr}.

\begin{table}[!h]
\centering
\begin{tabular}{|l|l|l|l|}
\hline
&$\varepsilon_1$&$\varepsilon_2$&$\varepsilon$\\
\hline
$N=1$, $l=10$& 0.0433 & 2.1662 & 2.2095\\
$N=1$, $l=100$& 0.0433 & 0.2167 & 0.2600\\
$N=2$, $l=100$& 0.0034 & 0.3588 & 0.3622\\
$N=2$, $l=1000$& 0.0034 & 0.0359 & 0.0393\\
\hline
\end{tabular}
\caption{Estimates for $\nnl W^T-W_N^l\nnr^0$, $\varepsilon_1=\sqrt{8}\pl\frac{2T}{\pi}\pr^\frac{1}{4}\sqrt{\frac{\cosh\frac{1}{2}}{2^{2N+3}(2N+3)!}}$, $\varepsilon_2=2^\frac{11}{4}\pl\frac{1}{T^3\pi^3e}\pr^\frac{1}{4}\frac{1}{l}\sum_{p=0}^N\frac{2^{2p}\sqrt{p+2}(p+2)!}{(2p+1)!}$,  $\varepsilon=\varepsilon_1+\varepsilon_2$ (see \eqref{estim7}).}\label{tab1}
\end{table}

\begin{figure}[!h]
\begin{center}
\begin{subfigure}[b]{0.48\linewidth}
\centering \includegraphics[width=\textwidth]{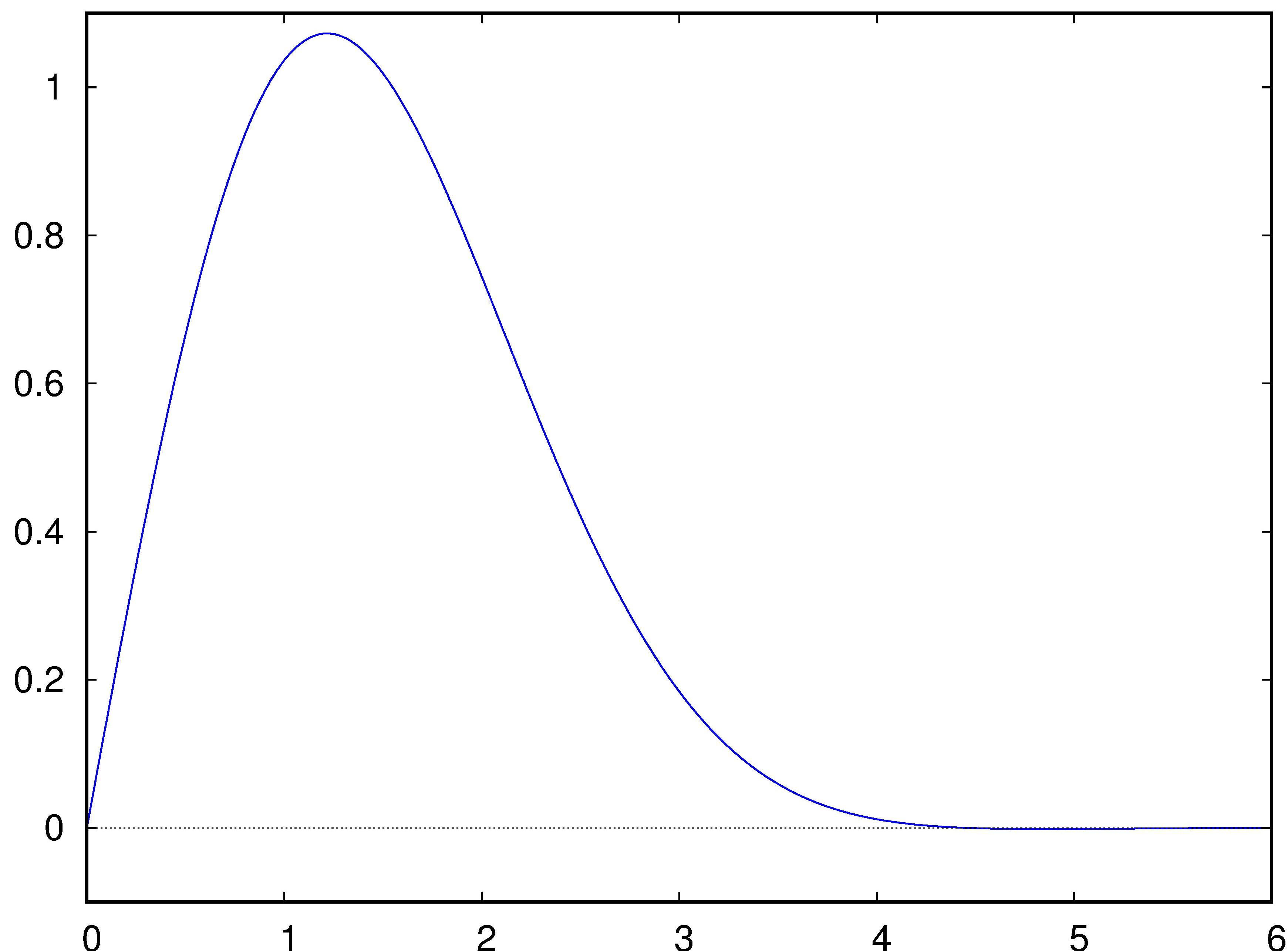}\\
\centering \caption{\parbox[t]{0.92\textwidth}{The given $W^T(x)$.\\[0.19\baselineskip]  \mbox{}\\ \mbox{}}}
\label{fig:appr12}
\end{subfigure}
\begin{subfigure}[b]{0.49\linewidth}
\centering \includegraphics[width=\textwidth]{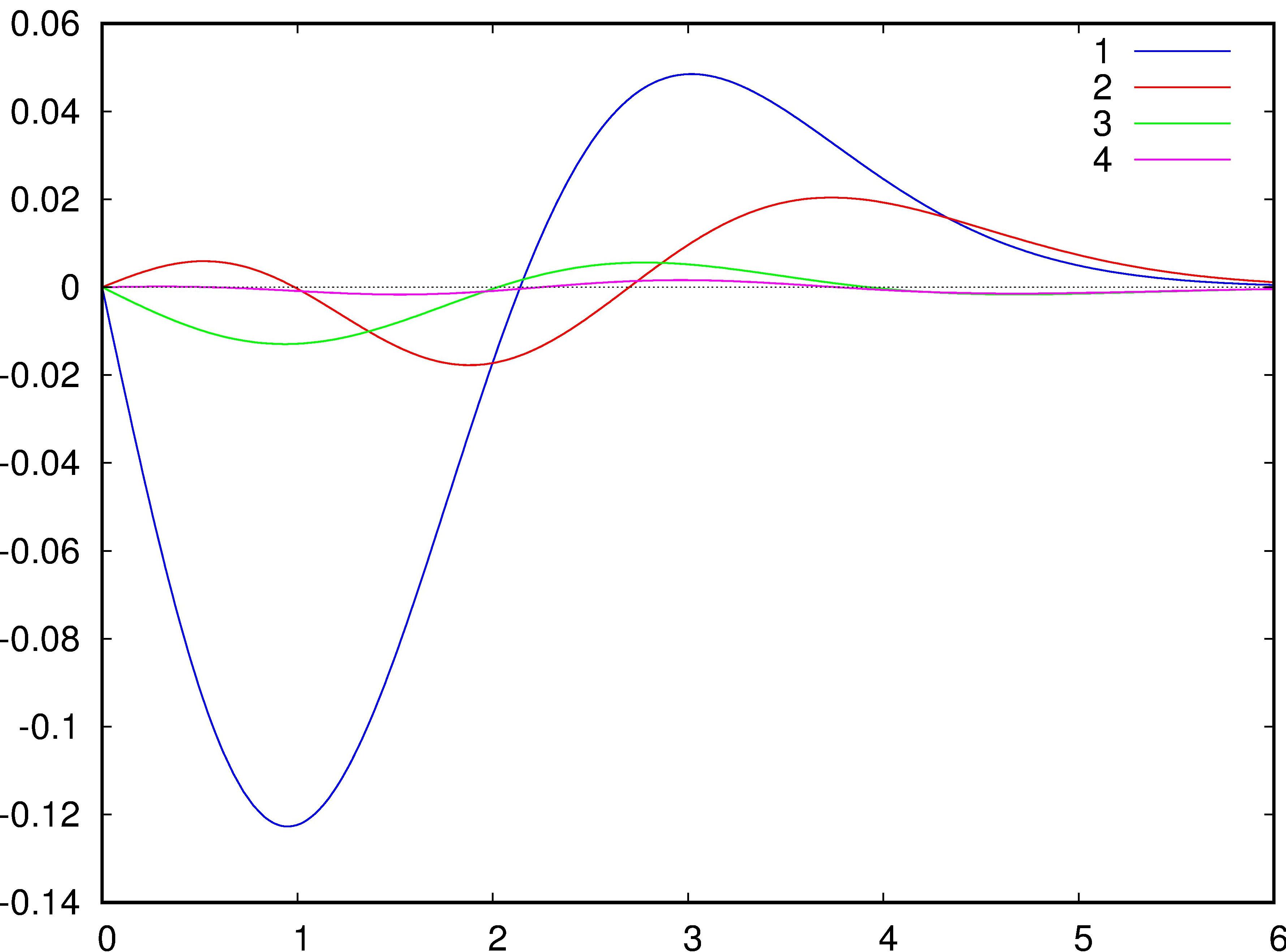}
\centering \caption{\parbox[t]{0.92\textwidth}{%
The differences $W^T-W_N^l$ in the cases: \zcx{1}~$N=1$, $l=10$; \zcx{2}~$N=1$, $l=100$; \zcx{3}~$N=2$, $l=100$; \zcx{4}~$N=2$, $l=1000$.}}
\label{fig:appr1}
\end{subfigure}
     \centering \caption{Influence of the control $u_N^l$ on the end state $W_N^l$ of solution to \eqref{eq1}, \eqref{ic1} with the control $u=u_N^l$ and the target state $W^T(x)=\frac4{\sqrt{{2}{\pi}}}e^{\frac14}e^{-\frac{x^2}{4T}}\sin\frac{x}{\sqrt{2T}}$.}\label{fig:appr}
\end{center}
\end{figure}

\end{example}

%%%%%%%%%%%%%%%%%%% References

%%%%%%%%%%%%%%%%%%%%%%%%%%%%%%

\vskip 0.5em
\par\noindent\small\textrm{\upshape Larissa Fardigola,}
\par%
\vskip 0.1em
\par\noindent\parbox[t]{\textwidth}{\small\textrm{\slshape  						B. Verkin Institute for Low Temperature Physics and Engineering of the
National Academy of Sciences of Ukraine, 47 Nauky Ave., Kharkiv, 61103, Ukraine,}}
\par%
\vskip 0.1em
\par\noindent\small{E-mail:\ \href{mailto:fardigola@ilt.kharkov.ua}{{\mdseries\ttfamily fardigola@ilt.kharkov.ua}}}
\par
\vskip 0.1em
%%%%%%%%%%
\vskip 0.5em
\par\noindent\small\textrm{\upshape Kateryna  Khalina,}
\par%
\vskip 0.1em
\par\noindent\parbox[t]{\textwidth}{\small\textrm{\slshape  						B. Verkin Institute for Low Temperature Physics and Engineering of the
National Academy of Sciences of Ukraine, 47 Nauky Ave., Kharkiv, 61103, Ukraine,}}
\par%
\vskip 0.1em
\par\noindent\small{E-mail:\ \href{mailto:fardigola@ilt.kharkov.ua}{{\mdseries\ttfamily khalina@meta.ua}}}
\par
\vskip 0.1em

%%%%%%%%%%%%%%%%%%%%%%%%%%%%%%%%%%%%%%%%%%%%%%%%%%%%%%%

\end{document}